\pdfoutput=1
\documentclass[11pt]{article}

\usepackage[margin=1in]{geometry}
\usepackage[utf8]{inputenc}
\usepackage[T1]{fontenc}
\usepackage{amsmath,amssymb,amsthm,mathtools,mathrsfs}
\usepackage{array}
\usepackage{booktabs}
\usepackage{enumitem}
\usepackage{graphicx}
\usepackage{tikz}
\usetikzlibrary{arrows.meta,calc,positioning,decorations.pathreplacing}
\usepackage{float}
\usepackage{microtype}
\usepackage{xcolor}
\usepackage{url}
\usepackage{natbib}
\usepackage{hyperref}
\hypersetup{colorlinks=true,linkcolor=blue,citecolor=blue,urlcolor=blue}

\newtheorem{theorem}{Theorem}[section]
\newtheorem{lemma}[theorem]{Lemma}
\newtheorem{corollary}[theorem]{Corollary}
\theoremstyle{definition}
\newtheorem{definition}[theorem]{Definition}
\newtheorem{remark}[theorem]{Remark}

\newcommand{\R}{\mathbb{R}}
\newcommand{\Q}{\mathbb{Q}}
\newcommand{\N}{\mathbb{N}}
\newcommand{\Z}{\mathbb{Z}}
\newcommand{\Ball}{\mathbb{B}}
\newcommand{\conv}{\operatorname{conv}}
\newcommand{\cl}{\operatorname{cl}}
\newcommand{\ri}{\operatorname{ri}}
\newcommand{\dist}{\operatorname{dist}}
\newcommand{\argmax}{\operatorname*{arg\,max}}
\newcommand{\argmin}{\operatorname*{arg\,min}}
\newcommand{\poly}{\mathrm{poly}}
\newcommand{\eps}{\varepsilon}

\newcommand{\MC}{\mathrm{MC}}
\newcommand{\Fsub}{\partial_{\mathrm F}}
\newcommand{\Csub}{\partial_{\mathrm C}}
\newcommand{\cone}{\operatorname{cone}}
\newcommand{\Forb}{\mathrm{Forb}}
\newcommand{\vertices}{\operatorname{vert}}
\newcommand{\vol}{\operatorname{vol}}
\newcommand{\relu}{\operatorname{ReLU}}
\newcommand{\size}{\mathsf{s}}

\title{Parameterized Complexity of Stationarity Testing for Piecewise-Affine Functions and Shallow CNN Losses}
\author{Yuhan Ye\\
MIT\\
\texttt{yyh03@mit.edu}}
\date{}

\begin{document}
\maketitle

\begin{abstract}
We study the parameterized complexity of testing approximate first-order stationarity at a prescribed point for continuous piecewise-affine (PA) functions, a basic task in nonsmooth optimization.
PA functions form a canonical model for nonsmooth stationarity testing and capture the local polyhedral geometry that appears in ReLU-type training losses.
Recent work of \citet{TianSo2025} shows that testing approximate stationarity notions for PA functions is computationally intractable in the worst case, and identifies fixed-dimensional tractability as an open direction.

We address this direction from the viewpoint of parameterized complexity, with the ambient dimension $d$ as the parameter.
In this paper, we give XP algorithms in fixed dimension for the tractable sides, and prove W[1]-hardness for the complementary sides.
Moreover, lower bounds under the Exponential Time Hypothesis rule out algorithms running in time $\rho(d)\size^{o(d)}$ for any computable function $\rho$, where $\size$ denotes the total binary encoding length of the stationarity-testing instance.
As a further consequence, our results yield the corresponding parameterized complexity picture for testing local minimality of continuous PA functions.
We further extend our hardness results to a family of shallow ReLU CNN training losses, with stationarity tested in the trainable weight space.
Thus, the same parameterized-complexity picture also appears for simple CNN training losses.
\end{abstract}

\section{Introduction}
For a continuously differentiable objective $f:\R^d\to\R$, a point is stationary precisely when its gradient vanishes.
Testing stationarity at a prescribed point is then straightforward.
For nonsmooth objectives, the situation is more subtle: several generalized subdifferentials coexist, and the resulting stationarity notions can differ substantially~\citep{Clarke1990,RockafellarWets1998,Mordukhovich2006}.
Moreover, deciding whether a prescribed point satisfies a given nonsmooth
stationarity notion can itself be computationally nontrivial.
This issue is intrinsic to ReLU-type training losses, where nondifferentiability
is an architectural feature rather than a degenerate exception~\citep{TianSo2025,YunSraJadbabaie2019}.

We study the computational complexity of testing approximate first-order stationarity at a prescribed point, which is different from the search problem of finding a stationary point. As a foundation for understanding more general piecewise differentiable objectives, we focus on continuous piecewise-affine (PA) functions. The polyhedral local geometry of PA objectives is also expressive enough to
capture local directional models arising in ReLU and max-pooling training
losses~\citep{RisterRubin2017,YunSraJadbabaie2019,TianSo2025}. The two stationarity notions considered throughout are Fr\'echet and Clarke stationarity.\footnote{Formal definitions are given in Section~\ref{subsec:stationarity}.} For continuous PA objectives, exact Fr\'echet stationarity is equivalent to local minimality. Recent work of \citet{TianSo2025} showed that approximate PA stationarity testing is already computationally intractable in the worst-case sense, and raised fixed-dimensional tractability as a natural open direction.

Against this background, we ask how the complexity changes when the ambient dimension $d$ is treated as a parameter.
Let $\size$ denote the total binary encoding length of the PA input formula, query point, and the stationarity approximation tolerance.
The most direct enumeration algorithms give running times of the form $\size^{O(d)}$ for explicit max--min input and $\size^{\Gamma(d)}$ for fixed-depth $n$-$\MC$ DC input\footnote{Formal definitions are given in Section~\ref{subsec:PArerepresent}.}, where the computable exponent $\Gamma$ may depend on the fixed depth.
In parameterized-complexity terminology, such bounds place the corresponding fixed-depth problems in XP.
The central question is whether one can substantially beat this direct enumeration dependence---in particular, obtain fixed-parameter tractable (FPT) algorithms with running time $f(d)\size^{O(1)}$---or whether a $\size^{\Theta(d)}$ running time is unavoidable. 

This fixed-dimension viewpoint is closely connected to recent work on ReLU training and verification.
For two-layer ReLU training, fixed data dimension yields natural enumeration algorithms, but this does not remove hardness~\citep{AroraEtAl2018,FroeseHertrichNiedermeier2022,FroeseHertrich2023}.
For ReLU verification, practical verification tools and complexity results have motivated a detailed study of the functions computed by ReLU networks~\citep{KatzEtAl2017Reluplex,KatzEtAl2019Marabou,FroeseGrilloSkutella2025}.
A COLT 2025 open-problem note by \citet{FroeseOpenProblem2025} asks whether several zonotope problems arising from ReLU verification are fixed-parameter tractable with respect to the input dimension.
The follow-up work of \citet{FroeseGrilloHertrichStargalla2025} proves W[1]-hardness and ETH-tight lower bounds for zonotope containment and related ReLU verification tasks.
Our results show that an analogous parameterized-complexity picture arises for stationarity testing of PA functions, and we further transfer this picture to a family of shallow ReLU convolutional neural network (CNN) training losses.

\subsection{Our contributions}
We use the following convention for the stationarity tests of PA functions $f$.
Fr\'echet-$\eps$-YES asks whether $0\in\Fsub f(w)+\eps\Ball$, and Clarke-$\eps$-NO asks whether $\dist(0,\Csub f(w))>\eps$; the other two sides are the complementary Fr\'echet-$\eps$-NO and Clarke-$\eps$-YES tests.
Our contributions are as follows.

\begin{enumerate}[label=\textnormal{(\roman*)},leftmargin=2.2em]
\item \textbf{Fixed-dimensional algorithms.}
For every $\eps\ge0$, Fr\'echet-$\eps$-YES and Clarke-$\eps$-NO are in XP.  For explicit max--min input, both tests are decidable in time $\size^{O(d)}$.  For fixed-depth explicit $n$-$\MC$ DC input, both tests are decidable in time $\size^{\Gamma(d)}$, where $\Gamma$ is a computable function that may depend on the fixed depth.

\item \textbf{Parameterized hardness and lower bounds.}
For the complementary sides, $d$-Fr\'echet-$\eps$-NO is W[1]-hard for every fixed $\eps\ge0$, and $d$-Clarke-$\eps$-YES is W[1]-hard for every fixed $\eps\in[0,1/2)$.  The reductions are parameter-preserving from $k$-Clique.  Under ETH, neither hard side admits an algorithm running in time $\rho(d)\size^{o(d)}$ for any computable function $\rho$.

\item \textbf{Consequences for local minimality and CNN losses.}
Our results give the corresponding dimension-parameterized picture for testing whether a prescribed point is a local minimizer of $f$.  
We also construct a shallow ReLU CNN training loss family whose Fr\'echet nonstationarity and Clarke stationarity test are W[1]-hard with respect to the trainable parameter dimension.
Hence, even this basic stationarity and local-minimality test can be fixed-parameter intractable for shallow CNN losses.
\end{enumerate}

\begin{table}[t]
\centering
\caption{Summary of the complexity landscape.}
\label{tab:complexity-landscape}
\renewcommand{\arraystretch}{1.12}
{\setlength{\tabcolsep}{4pt}
\begin{tabular}{>{\raggedright\arraybackslash}p{0.25\linewidth} >{\raggedright\arraybackslash}p{0.32\linewidth} >{\raggedright\arraybackslash}p{0.15\linewidth} >{\raggedright\arraybackslash}p{0.2\linewidth}}
\toprule
Problem & Description & Complexity & Reference\\
\midrule
Fr\'echet-$\eps$-YES & $0\in \Fsub f(w)+\eps\Ball$ & coNP-hard & \citep{TianSo2025}\\
Clarke-$\eps$-YES & $0\in \Csub f(w)+\eps\Ball$ & NP-hard & \citep{TianSo2025}\\
$d$-Fr\'echet-$\eps$-YES & $0\in \Fsub f(w)+\eps\Ball$ & in XP & Theorem~\ref{thm:fixed-d-xp}\\
$d$-Clarke-$\eps$-NO & $\dist(0,\Csub f(w))>\eps$ & in XP & Theorem~\ref{thm:fixed-d-xp}\\
$d$-Fr\'echet-$\eps$-NO & $\dist(0,\Fsub f(w))>\eps$ & W[1]-hard & Theorem~\ref{thm:w1-hard}\\
$d$-Clarke-$\eps$-YES & $0\in \Csub f(w)+\eps\Ball$ & W[1]-hard & Theorem~\ref{thm:w1-hard}\\
\addlinespace[2pt]
\multicolumn{4}{@{}>{\raggedright\arraybackslash}p{\linewidth}@{}}{\centering\emph{CNN training losses $L$: shallow ReLU CNN with max-pooling; parameter $p=\dim(\theta)$.}}\\
\addlinespace[1pt]
\hspace{0.75em}Fr\'echet-$\eps$-NO & $\dist(0,\Fsub L(\theta_0))>\eps$ & W[1]-hard in $p$ & Theorem~\ref{thm:cnn-hardness}\\
\hspace{0.75em}Clarke-$\eps$-YES & $0\in \Csub L(\theta_0)+\eps\Ball$ & W[1]-hard in $p$ & Theorem~\ref{thm:cnn-hardness}\\
\bottomrule
\end{tabular}}
\end{table}

\subsection{Related work}
\paragraph{Stationarity testing for PA functions and ReLU network losses.}
The complexity of testing whether a prescribed point satisfies a local solution concept has a long history.
Classical results establish hardness for local optimality in quadratic and nonlinear optimization, and later work studies local-minimum testing and related local solution concepts for polynomial and PA objectives~\citep{MurtyKabadi1987,Nesterov2013,AhmadiZhang2022ComplexityAspects,AhmadiZhang2022LocalMin}.
\citet{TianSo2025} developed a PA framework for stationarity testing that isolates the representation-level polyhedral core of the problem, proves coNP-hardness for approximate Fr\'echet stationarity and NP-hardness for approximate Clarke stationarity, and develops near-stationarity algorithms and regularity conditions.
An earlier version appeared at the NeurIPS 2023 OPT workshop~\citep{TianSo2023OPT}.
Their results also include applications to structured nonsmooth neural-network losses, including shallow ReLU-type networks and CNNs.
Local optimality and saddle phenomena for shallow ReLU losses at nondifferentiable points have also been studied through explicit polyhedral analysis~\citep{YunSraJadbabaie2019}.

\paragraph{Approximate stationarity in nonsmooth optimization.}
Approximate stationarity in nonsmooth nonconvex optimization has been studied from several algorithmic and oracle-complexity perspectives.
Representative directions include subdifferential-based stationarity notions~\citep{LiSoMa2020}, algorithms for finding approximate or Goldstein-type stationary points~\citep{ZhangEtAl2020,TianZhouSo2022,JordanEtAl2023}, stochastic and online algorithms for nonsmooth DC optimization~\citep{YeCuiWang2025OnlineAdaptiveSampling}, descent-oriented subgradient methods for nonsmooth optimization~\citep{LiCui2025SubgradientRegularization}, and lower bounds or dimension-free impossibility results~\citep{KornowskiShamir2021,TianSo2024}.
These works mostly concern oracle or algorithmic models for finding approximate stationary points.
In contrast, our task is to decide a stationarity test with dimension as the hardness parameter.

\paragraph{Parameterized complexity of fixed-dimensional ReLU training and verification.}
Fixed-dimensional ReLU training and verification provide a closely related parameterized-complexity backdrop.
For two-layer ReLU training, fixed data dimension yields natural enumeration algorithms, but this does not eliminate hardness~\citep{AroraEtAl2018,FroeseHertrichNiedermeier2022,FroeseHertrich2023}.
A complementary line studies the polyhedral partition structure of ReLU
networks through their linear regions: classical work gives expressivity bounds
for such regions, while recent work studies the complexity of counting them
~\citep{MontufarEtAl2014,StargallaHertrichReichman2025}.
For ReLU verification, algorithmic systems such as Reluplex and Marabou helped establish neural-network verification as a central computational task~\citep{KatzEtAl2017Reluplex,KatzEtAl2019Marabou}, while recent complexity work connects ReLU positivity, surjectivity, and verification to zonotope containment~\citep{FroeseGrilloSkutella2025,KulmburgAlthoff2021}.
The COLT 2025 open-problem of \citet{FroeseOpenProblem2025} formulated the fixed-parameter tractability of several zonotope problems as a central question.
Subsequent work proved W[1]-hardness and ETH-tight lower bounds for zonotope containment and related ReLU verification tasks~\citep{FroeseGrilloHertrichStargalla2025}.
These works concern global training or verification problems in fixed input dimension, whereas our focus is local stationarity testing at a prescribed point.

\section{Preliminaries}\label{sec:prelim}

\noindent
All numerical input data are rationally encoded.
We write $[m]:=\{1,\dots,m\}$.
All norms are Euclidean, and $\Ball:=\{u\in\R^d:\|u\|_2\le1\}$ denotes the closed unit ball in the ambient space.
For a set $S\subseteq\R^d$, $\conv(S)$, $\cl(S)$, $\ri(S)$, and $\cone(S)$ denote its convex hull, closure, relative interior, and conic hull, respectively.
For a polytope $P$, $\vertices(P)$ denotes its vertex set.
For a nonempty compact convex set $K\subseteq\R^d$, its support function is
$\sigma_K(u):=\max_{z\in K}\langle z,u\rangle$. For a polytope $P\subseteq\R^d$ and a point $v\in P$, we write $N_P(v):=\{u\in\R^d:\langle v,u\rangle=\sigma_P(u)\}$ for the normal cone of $P$ at $v$; $\ri N_P(v)$ denotes the relative interior of this cone. We use the convention $\dist(0,\emptyset)=+\infty$.

For upper bounds, $\size$ denotes the total binary encoding length of the relevant PA input formula together with the query point and, when it is part of the input, the stationarity approximation tolerance \(\eps\ge 0\).
When the dimension dependence is displayed explicitly, $\size^{O(d)}$ means $\size^{c d+c_0}$ for absolute constants $c,c_0$ independent of  other parameters of the instance.
For every fixed $d$, the exponent $c d+c_0$ is therefore a constant depending only on $d$, so such a bound is of the form $\size^{O_d(1)}$.
For hardness reductions, $N$ denotes the number of vertices in the input graph and also the total encoding size when no confusion can arise.

\subsection{Piecewise-affine representations}
\label{subsec:PArerepresent}
A continuous function $f:\R^d\to\R$ is piecewise-affine (PA) if $\R^d$ admits a finite polyhedral partition such that $f$ is affine on every cell.
Continuous PA functions admit several equivalent finite descriptions, including max--min and DC representations; see, for example, \citet{Ovchinnikov2002} and \citet{Scholtes2012}.
For complexity-theoretic purposes, the representation and its encoding length are part of the input model.
We use two compact input models.
The first exposes max--min structure explicitly.

\begin{definition}[Explicit max--min representation]\label{def:maxmin}
A PA function $f:\R^d\to\R$ is given in explicit max--min form if there are rational affine functions $\alpha_j(w)=\langle x_j,w\rangle+a_j$ for $j\in[k]$, with $(x_j,a_j)\in\Q^{d+1}$, and nonempty index sets $M_1,\dots,M_\ell\subseteq[k]$ such that
\[
 f(w)=\max_{1\le i\le \ell}\ \min_{j\in M_i}\alpha_j(w).
\]
\end{definition}

The second model is a DC input model. We follow the nested max/sum syntax of \citet{TianSo2025} for the convex PA components.

\begin{definition}[$n$-$\MC$ representation]\label{def:mc}
A convex PA function $h:\R^d\to\R$ is given in explicit $n$-$\MC$ form if there are integers $n\in\N$ and positive integers $(I_p,J_p)$, $p\in[n]$, together with rational affine leaf data
\[
\bigl(x_{i_1,j_1,\dots,i_n,j_n},a_{i_1,j_1,\dots,i_n,j_n}\bigr)\in\Q^{d+1}
\]
indexed by $(i_1,j_1,\dots,i_n,j_n)\in\prod_{p=1}^n([I_p]\times[J_p])$ such that
\[
 h(w)=\sum_{1\le j_n\le J_n}\max_{1\le i_n\le I_n}\cdots
 \sum_{1\le j_1\le J_1}\max_{1\le i_1\le I_1}
 \bigl(\langle w,x_{i_1,j_1,\dots,i_n,j_n}\rangle+a_{i_1,j_1,\dots,i_n,j_n}\bigr).
\]
A DC representation of a PA function is an expression $f=h-g$ in which $h$ and $g$ are convex PA functions given in $n_h$-$\MC$ and $n_g$-$\MC$ form, respectively.  The finite expression tree is given as part of the input: the ranges \((I_p,J_p)\), the alternating max/sum structure, and all affine leaf data are listed.
The depths \(n_h,n_g\) contribute to the encoding length but are not parameters; in fixed-depth families they are bounded by an input-independent constant.
\end{definition}

\paragraph{V-representations.}
We also use the following polyhedral representation convention.
A V-representation of a polytope $P\subseteq\R^d$ is a finite rational set
$V\subseteq\Q^d$ such that $P=\conv(V)$; the set $V$ need not be irredundant.
In the algorithmic arguments below, whenever we write $\vertices(P)$, we mean the
reduced vertex list of $P$. Given a finite rational V-representation $V$, this
reduction can be performed by testing, for each $v\in V$, whether
$v\in\conv(V\setminus\{v\})$, which is a rational linear feasibility problem.

\subsection{Parameterized complexity and ETH}
We use standard parameterized-complexity terminology~\citep{CyganEtAl2015}.
A parameterized problem with parameter $k$ is fixed-parameter tractable (FPT) if it is solvable in time $f(k)|x|^{O(1)}$ for some computable function $f$.
It is in XP if it is solvable in time $|x|^{\Gamma(k)}$ for some computable function $\Gamma$.
The class W[1] can be viewed as a parameterized analogue of NP for problems whose witnesses have size controlled by the parameter; the canonical W[1]-complete problem is $k$-Clique parameterized by the target clique size $k$.
A parameterized reduction maps an instance $(x,k)$ to an instance $(x',k')$ in time $f(k)|x|^{O(1)}$, preserves yes/no answers, and ensures that $k'$ is bounded by a computable function of $k$.
Thus, an FPT algorithm for any W[1]-hard problem would imply FPT algorithms for all problems in W[1], in particular for $k$-Clique; this is the standard reason such hardness is taken as evidence against fixed-parameter tractability.

The Exponential Time Hypothesis (ETH) implies that $k$-Clique on an $N$-vertex graph cannot be solved in time $\rho(k)N^{o(k)}$ for any computable function $\rho$~\citep{CyganEtAl2015}.
In the reductions below, an $N$-vertex $k$-Clique instance is transformed into a stationarity-testing instance with dimension $d=\Theta(k)$ and encoding size $\size\le N^{O(1)}$.
Hence, a target algorithm running in time $\rho(d)\size^{o(d)}$ would contradict ETH.
The CNN lower bounds use the same argument with trainable dimension $p=\Theta(k)$ and network encoding size $M\le N^{O(1)}$.

\subsection{Stationarity notions}
\label{subsec:stationarity}
We write $\partial_{\mathrm C}$ and $\partial_{\mathrm F}$ for the Clarke and Fr\'echet subdifferentials, respectively.  The general definitions follow the standard nonsmooth-analysis viewpoint; see, for example, \citep{Clarke1990,RockafellarWets1998} and the monograph of \citet{CuiPang2021}. For a locally Lipschitz function $f:\R^d\to\R$, the Clarke directional derivative at $x$ in direction $u$ is
\[
 f^\circ(x;u):=\limsup_{\substack{x'\to x\\ t\downarrow0}}
 \frac{f(x'+tu)-f(x')}{t}.
\]
The Clarke subdifferential is the compact convex set
\[
 \Csub f(x):=\{g\in\R^d:\langle g,u\rangle\le f^\circ(x;u)\ \forall u\in\R^d\},
\]
equivalently characterized by $f^\circ(x;u)=\max_{g\in\Csub f(x)}\langle g,u\rangle$.
The Fr\'echet subdifferential is the set of first-order local lower-support slopes,
\[
 \Fsub f(x):=\left\{g\in\R^d:
 \liminf_{y\to x,\ y\ne x}
 \frac{f(y)-f(x)-\langle g,y-x\rangle}{\|y-x\|}\ge0\right\}.
\]
For the PA functions considered in this paper, the one-sided directional derivative
$f'(x;u):=\lim_{t\downarrow0}(f(x+tu)-f(x))/t$ exists in every direction, and the preceding definition is equivalent to
\[
 \Fsub f(x)=\{g\in\R^d:\langle g,u\rangle\le f'(x;u)\ \forall u\in\R^d\}.
\]
Intuitively, Fr\'echet subgradients are first-order local lower supports, whereas for PA functions Clarke subgradients are convex combinations of gradients on nearby full-dimensional affine regions. For locally Lipschitz functions one always has $\Fsub f(x)\subseteq\Csub f(x)$. 

A locally Lipschitz function is \emph{Clarke regular} at $x$ if $f'(x;u)$ exists and equals $f^\circ(x;u)$ for every direction $u$.  At regular points, the Fr\'echet and Clarke subdifferentials coincide in the directional description above.  This includes $C^1$ functions and finite (weakly) convex functions; in particular, every convex PA component used below is regular, so its Fr\'echet, Clarke, and usual convex subdifferentials agree. If $h$ is convex and PA, then $h'(x;u)=\sigma_{\Csub h(x)}(u)$. 

\paragraph{Stationarity-testing polarities.}
A point $x$ is Fr\'echet stationary if $0\in\Fsub f(x)$, and Clarke stationary if $0\in\Csub f(x)$. Appendix~\ref{app:stationarity-guide} gives a geometric guide to these notions. The $\eps$-stationary versions replace membership by distance at most $\eps$.
Fix a PA input formula, a query point $w$, and a stationarity approximation tolerance $\eps\ge0$.  For $\star\in\{\mathrm F,\mathrm C\}$, the $\star$-$\eps$-YES predicate is $\dist(0,\partial_{\star}f(w))\le\eps$, equivalently $0\in\partial_{\star}f(w)+\eps\Ball$, while the $\star$-$\eps$-NO predicate is $\dist(0,\partial_{\star}f(w))>\eps$.

\subsection{Local PA calculus}\label{subsec:local-pa}
The following local PA calculus lemmas will be used repeatedly; proofs are deferred to Appendix~\ref{app:prelim-proofs}.

\begin{lemma}[Exact local model for PA functions]\label{lem:pa-local-model}
Let $f:\R^d\to\R$ be continuous and PA, and fix $w\in\R^d$.
There exist $r>0$ and a positively homogeneous PA function $\phi_w:\R^d\to\R$ such that
\[
 f(w+u)=f(w)+\phi_w(u)\qquad\text{whenever }\|u\|\le r.
\]
Consequently, $f'(w;u)=\phi_w(u)$, $\Fsub f(w)=\Fsub\phi_w(0)$, and $\Csub f(w)=\Csub\phi_w(0)$.
Moreover, $0\in\Fsub f(w)$ if and only if $w$ is a local minimizer of $f$.
\end{lemma}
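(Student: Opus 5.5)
We will work directly from the polyhedral-partition definition of a continuous PA function. Fix a finite polyhedral partition $\{C_1,\dots,C_m\}$ of $\R^d$ such that $f$ agrees with an affine function $A_i(y)=\langle g_i,y\rangle+b_i$ on each $C_i$. We may assume each $C_i$ is closed and $f=A_i$ on $C_i$; if the partition consists of non-closed cells, we take closures, and continuity of $f$ gives $f=A_i$ on $\cl(C_i)$. Let $I(w):=\{i: w\in C_i\}$.

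\emph{Step 1: choosing $r$.} Each cell not containing $w$ is closed, so it lies at positive distance from $w$. Choose $r>0$ smaller than all these distances. Then the ball $w+r\Ball$ meets only the cells $C_i$ with $i\in I(w)$. Each such $C_i$ is a polyhedron $\{y:\langle c_k,y\rangle\le e_k\}$. Shrinking $r$ further so that the constraints inactive at $w$ remain strictly satisfied, we obtain $C_i\cap(w+r\Ball)=(w+T_i)\cap(w+r\Ball)$. Here $T_i:=\{u:\langle c_k,u\rangle\le0 \text{ for active } k\}$ is the tangent cone of $C_i$ at $w$. Moreover, $\bigcup_{i\in I(w)}T_i=\R^d$, because every direction $u$ with $\|u\|\le r$ lies in some $T_i$.

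\emph{Step 2: defining $\phi_w$.} Put $\phi_w(u):=\langle g_i,u\rangle$ for $u\in T_i$, $i\in I(w)$. This is well defined: if $u\in T_i\cap T_j$, then for small $t>0$ the point $w+tu$ lies in $C_i\cap C_j$. Hence $A_i(w+tu)=A_j(w+tu)$, and since also $A_i(w)=A_j(w)=f(w)$, we get $\langle g_i,u\rangle=\langle g_j,u\rangle$. The function $\phi_w$ is positively homogeneous, continuous, and PA on the conic partition $\{T_i\}$. For $\|u\|\le r$ with $u\in T_i$, we have $f(w+u)=A_i(w+u)=f(w)+\langle g_i,u\rangle=f(w)+\phi_w(u)$.

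\emph{Step 3: consequences.} For any fixed $u$ and all small $t$, the identity $f(w+tu)-f(w)=t\phi_w(u)$ gives $f'(w;u)=\phi_w(u)$. Both subdifferentials are local notions, since their definitions involve only values near the base point. On a neighborhood of $w$, $f$ coincides with the translate $y\mapsto f(w)+\phi_w(y-w)$, and adding a constant and translating do not change $\Fsub$ or $\Csub$. For the Clarke subdifferential, the $\limsup$ only involves $x'\to w$ and small $t$, so it sees only this neighborhood. Hence $\Fsub f(w)=\Fsub\phi_w(0)$ and $\Csub f(w)=\Csub\phi_w(0)$.

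\emph{Step 4: local minimality.} By the directional description, $0\in\Fsub f(w)$ if and only if $f'(w;u)=\phi_w(u)\ge0$ for all $u$. If $\phi_w\ge0$, then $f(w+u)\ge f(w)$ for $\|u\|\le r$, so $w$ is a local minimizer. Conversely, if $w$ is a local minimizer, then $t\phi_w(u)=f(w+tu)-f(w)\ge0$ for small $t>0$, so $\phi_w\ge0$.

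The main obstacle is the careful choice of $r$ in Step 1, together with the well-definedness argument for $\phi_w$ on overlapping tangent cones. These are handled by the closedness of the cells and the finiteness of the partition.
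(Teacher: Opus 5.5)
Your proof is correct and follows the same route as the paper's: restrict to the cells whose closures contain $w$, replace each by its tangent cone via the inequalities tight at $w$, glue the linear pieces into a positively homogeneous fan model, and then read off $f'(w;\cdot)$, both subdifferentials, and local minimality from the exact local identity. Your version is more explicit about the choice of $r$ and about consistency on overlapping cones, which you derive by evaluating at $w+tu$ rather than by appealing to continuity on common faces.
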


For explicit max--min input, the local model can be written directly from the active indices.
Let $f$ be given as in Definition~\ref{def:maxmin} and fix $w\in\R^d$.
Set $m_i(w):=\min_{j\in M_i}\alpha_j(w)$, $I(w):=\argmax_{i\in[\ell]}m_i(w)$, and $J_i(w):=\argmin_{j\in M_i}\alpha_j(w)$ for $i\in I(w)$.
Then the exact local model from Lemma~\ref{lem:pa-local-model} is
\begin{equation}\label{eq:local-model}
 \phi_w(u)=f'(w;u)=\max_{i\in I(w)}\min_{j\in J_i(w)}\langle x_j,u\rangle.
\end{equation}

Lemma~\ref{lem:frechet-translation} turns the Fr\'echet test into the translate-containment condition $s+Y\subseteq X$.

\begin{lemma}[Characterization of Fr\'echet stationarity]\label{lem:frechet-translation}
Let $h,g:\R^d\to\R$ be convex PA functions, set $f=h-g$, fix $w\in\R^d$, and put $X:=\Csub h(w)$ and $Y:=\Csub g(w)$ for the usual convex subdifferential polytopes.  Then $\Fsub(h-g)(w)=\{s\in\R^d:\ s+Y\subseteq X\}$, and
\[
 0\in\Fsub(h-g)(w)+\eps\Ball
 \iff \exists s\in\eps\Ball\text{ such that }s+Y\subseteq X\quad(\eps\ge0).
\]
In particular, $0\in\Fsub(h-g)(w)$ if and only if $Y\subseteq X$.
\end{lemma}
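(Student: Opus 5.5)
We prove the statement through the directional-derivative description of $\Fsub$ for PA functions from Section~\ref{subsec:stationarity}. Since $f=h-g$ with $h,g$ convex PA, one-sided directional derivatives add. The preliminaries give $h'(w;u)=\sigma_X(u)$ and $g'(w;u)=\sigma_Y(u)$, so
\[
 f'(w;u)=\sigma_X(u)-\sigma_Y(u)\qquad\forall u\in\R^d .
\]
Hence $s\in\Fsub f(w)$ if and only if $\langle s,u\rangle+\sigma_Y(u)\le\sigma_X(u)$ for all $u$. Since $\langle s,u\rangle+\sigma_Y(u)=\sigma_{s+Y}(u)$, this says exactly that $\sigma_{s+Y}\le\sigma_X$ pointwise.

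The key step is the classical fact that for nonempty compact convex sets $A,B$, we have $\sigma_A\le\sigma_B$ pointwise if and only if $A\subseteq B$.

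\emph{If $A\subseteq B$:} then $\sigma_A\le\sigma_B$ directly, since the maximum over $B$ is taken over a larger set.

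\emph{If $\sigma_A\le\sigma_B$:} suppose some $a\in A\setminus B$. Since $B$ is closed and convex, strict separation gives $u$ with
\[
 \langle a,u\rangle>\sigma_B(u).
\]
This forces $\sigma_A(u)>\sigma_B(u)$, a contradiction.

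Applying this with $A=s+Y$ and $B=X$ yields $\Fsub(h-g)(w)=\{s:\ s+Y\subseteq X\}$. Both sets are polytopes, as convex subdifferentials of convex PA functions, so compactness and convexity hold.

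The $\eps$-version is then a restatement. We have $0\in\Fsub f(w)+\eps\Ball$ if and only if there is $s\in\Fsub f(w)$ with $\|s\|\le\eps$. The sign is immaterial because $\Ball=-\Ball$: $0=s+b$ with $b\in\eps\Ball$ means $s=-b\in\eps\Ball$. By the characterization just proved, this is the same as the existence of $s\in\eps\Ball$ with $s+Y\subseteq X$. Taking $\eps=0$ gives the final claim that $0\in\Fsub(h-g)(w)$ if and only if $Y\subseteq X$.

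The only point needing care is the additivity $f'(w;\cdot)=h'(w;\cdot)-g'(w;\cdot)$ together with the support-function formula for convex PA directional derivatives. Additivity is immediate because both one-sided limits exist, and the support-function formula is already recorded in the preliminaries. The separation step is standard, so no real obstacle is expected.
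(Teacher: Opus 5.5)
Your proof is correct and follows essentially the same route as the paper: it writes the directional derivative as $\sigma_X-\sigma_Y$, rewrites $\langle s,\cdot\rangle+\sigma_Y$ as $\sigma_{s+Y}$, and uses that support-function domination is equivalent to containment for compact convex sets. You also spell out the separation argument behind that equivalence and the symmetry $\Ball=-\Ball$ in the $\eps$-version, both of which the paper leaves implicit.
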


\begin{lemma}[Characterization of Clarke stationarity]\label{lem:dc-clarke-slopes}
Let $X,Y\subseteq\R^d$ be nonempty compact polytopes, set $\phi=\sigma_X-\sigma_Y$, and write $N_P(v):=\{u:\langle v,u\rangle=\sigma_P(u)\}$ for the normal cone of a polytope $P$ at a vertex $v$.  A pair $x\in\vertices(X)$, $y\in\vertices(Y)$ is simultaneously exposed if $\ri N_X(x)\cap\ri N_Y(y)\ne\emptyset$.  If $S_e(X,Y)$ denotes the set of differences $x-y$ over all such pairs, then $\Csub\phi(0)=\conv(S_e(X,Y))$.
\end{lemma}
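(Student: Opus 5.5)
The plan is to use the standard description of the Clarke subdifferential of a PA function as the convex hull of the gradients on the full-dimensional affine regions adjacent to the point, applied to $\phi=\sigma_X-\sigma_Y$ at $0$. Since $\phi$ is positively homogeneous and PA, every neighbourhood of $0$ meets every open cone on which $\phi$ is linear. Hence $\Csub\phi(0)=\conv\{\nabla\phi(u): u\in D\}$, where $D$ is the set of points at which $\phi$ is differentiable. (By Rademacher's theorem it suffices to take limits of gradients; here the gradients are piecewise constant on open cones, so the limits are just the attained values.) The task is therefore to identify exactly which gradient values occur on open sets.

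\textbf{Gradients occur only at simultaneously exposed pairs.} The normal fans of $X$ and $Y$ are complete polyhedral fans: the cones $N_X(x)$, $x\in\vertices(X)$, cover $\R^d$ and have pairwise disjoint interiors, and the same holds for $Y$. These vertex cones are full-dimensional, so $\ri N_X(x)=\operatorname{int}N_X(x)$. On $\operatorname{int}N_X(x)$ we have $\sigma_X(u)=\langle x,u\rangle$, and $\sigma_X$ is differentiable there with gradient $x$; the analogous statement holds for $Y$. Consequently, on the open cone $C_{x,y}:=\operatorname{int}N_X(x)\cap\operatorname{int}N_Y(y)$, the function $\phi$ is linear with gradient $x-y$. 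The pair is simultaneously exposed precisely when $C_{x,y}\neq\emptyset$, and in that case $x-y$ is a gradient attained on an open set. Conversely, the union of the open cones $C_{x,y}$ is the complement of the union of the boundaries of the normal cones of the two fans. That complement is a finite union of lower-dimensional sets, so it is closed with empty interior, and therefore the union of the $C_{x,y}$ is open and dense.

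\textbf{Density gives the reverse inclusion.} Let $u\in D$, and take $u_k\to u$ with $u_k\in C_{x_k,y_k}$. Because there are only finitely many pairs, we may pass to a subsequence on which the pair $(x_k,y_k)$ is a constant $(x,y)$. Then $u\in\cl C_{x,y}$. Near $u$, the function $\phi$ is differentiable and agrees with the linear function $\langle x-y,\cdot\rangle$ on the open set $C_{x,y}$, which accumulates at $u$. To conclude $\nabla\phi(u)=x-y$, I would rather avoid pointwise arguments altogether. The cleaner route is to use the limiting-gradient formula only along sequences drawn from the dense open set $\bigcup C_{x,y}$: Clarke's theorem allows any null set to be excluded from the gradient formula, and the complement of this dense open set is a null set, since it is a finite union of hyperplane pieces. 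With that exclusion, every limiting gradient is a limit of vectors $x-y$ with $(x,y)$ simultaneously exposed, and hence lies in the finite set $S_e(X,Y)$. This yields $\Csub\phi(0)=\conv(S_e(X,Y))$.

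The step I expect to be most delicate is justifying the null-set version of the Clarke gradient formula. An alternative that avoids it is to compute $\phi^\circ(0;v)$ directly. For each direction $v$, one shows $\phi^\circ(0;v)=\max_{(x,y)}\langle x-y,v\rangle$, with the maximum over simultaneously exposed pairs, by the following argument. Because $\phi$ is PA and homogeneous, the $\limsup$ in the definition of $\phi^\circ$ can be realized by base points $x'$ lying in a region $C_{x,y}$, taking $t$ small enough that $x'+tv$ remains in the same region. This shows $\phi^\circ(0;v)$ is at least the maximum. For the reverse inequality, the difference quotient along any segment is an average of directional derivatives, each of the form $\langle x-y,v\rangle$ over the regions the segment crosses, so it never exceeds the maximum. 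The support-function characterization of $\Csub\phi(0)$ then identifies it with $\conv(S_e(X,Y))$.
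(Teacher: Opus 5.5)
Your proof is correct. The forward inclusion is the same as the paper's: on $\ri N_X(x)\cap\ri N_Y(y)$ both support functions are linear, so $x-y$ is the gradient of $\phi$ on an open cone adjacent to $0$. Your remark that vertex normal cones are full-dimensional, so that $\ri=\operatorname{int}$, is the paper's strict-inequality description of $\ri N_P(v)$.

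The reverse inclusion is argued differently. The paper takes as given the PA formula expressing $\Csub$ as the convex hull of slopes on adjacent full-dimensional affine regions. It then shows that each such slope lies in $S_e(X,Y)$ by refining the region with the common refinement of the two normal fans. You instead observe that the cones $C_{x,y}$, which are exactly the open cells of that common refinement, form a dense open set with null complement. You then apply Clarke's gradient formula with that null set excluded (Theorem 2.5.1 in Clarke's monograph). This is a standard citation rather than a delicate step. It spares you from classifying the affine regions of $\phi$ or examining gradients at differentiability points on cell boundaries. The pointwise argument you abandoned, and the formula over all of $D$ in your first paragraph, can simply be dropped.

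Your fallback via $\phi^\circ(0;v)$ is a genuinely different and fully self-contained route, needing only the support-function definition of $\Csub$. Its upper bound needs one more step when $[x',x'+tv]$ lies inside a cell boundary. Note that the finitely many closed cones $\cl C_{x,y}$ cover $\R^d$ by density, and $\phi=\langle x-y,\cdot\rangle$ on each of them by continuity. So the segment splits into finitely many subsegments, on each of which $\phi$ is linear with slope $\langle x-y,v\rangle$ for some simultaneously exposed pair. The difference quotient is then a convex combination of these values.
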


The local objects above are the geometric interface used throughout the paper.  For max--min input, the active formula at $w$ gives the exact homogeneous model $\phi_w$.  For explicit $n$-$\MC$ DC input $f=h-g$, the natural objects are instead the convex subdifferential polytopes $X=\Csub h(w)$ and $Y=\Csub g(w)$. The fixed-dimensional algorithms either enumerate arrangement cones of the max--min local model or manipulate V-representations of the two convex subdifferential polytopes; the hardness reductions build these local objects so that containment or convex balancing encodes $k$-Clique.

\begin{remark}[Why the input representation matters]\label{rem:representation}
The explicit max--min and explicit $n$-$\MC$ DC models expose different local data, and neither is subsumed by the flat single-max DC baseline in Appendix~\ref{app:single-max-dc} through a polynomial-size transformation in general.

That appendix shows that, if both convex components are given in the flat form $h=\max_{a\in A}\ell_a$ and $g=\max_{b\in B}m_b$ with all affine pieces explicitly listed, then the corresponding stationarity tests can be solved in polynomial time.  However, many compact PA constructions do not fit this flat input model without an exponential expansion.  For example, the hard DC family of \citet[Problems~3.10--3.11]{TianSo2025} uses, for $y_1,\dots,y_m\in\{-1,0,1\}^n$ and $r\in\N$, the simple PA function
\[
    f_F(\xi)=h_F(\xi)-g_F(\xi)
    =r\|\xi\|_\infty-
      \max\left\{r\|\xi\|_\infty,\sum_{q=1}^m |\langle \xi,y_q\rangle|\right\}.
\]
The term $\sum_{q=1}^m|\langle \xi,y_q\rangle|$ is compact in the $n$-$\MC$ syntax, since it is a sum of two-term maxima.  In contrast, its flat single-max expansion is
$\sum_{q=1}^m |\langle \xi,y_q\rangle|=\max_{\sigma\in\{\pm1\}^m}\left\langle \xi,\sum_{q=1}^m \sigma_q y_q\right\rangle,$
which may have exponentially many affine pieces.  Thus, the single-max DC case is a useful polynomial baseline for a much flatter representation, but it does not subsume the compact $n$-$\MC$ DC instances or the explicit max--min instances studied here.  This is why we analyze the stationarity-testing complexity of both input models directly.
\end{remark}


\section{Fixed-dimension algorithms}\label{sec:fixed-d}

We first show the enumeration upper bounds when the ambient dimension is fixed. This gives the XP baseline. Throughout, $\size$ denotes the total binary encoding length of the relevant input representation together with the query point and the stationarity approximation tolerance \(\eps\ge 0\). 

\begin{theorem}\label{thm:fixed-d-xp}
Let $d$ be the ambient dimension. Consider the following input models:
\begin{enumerate}[label=\textnormal{(\alph*)},leftmargin=2.2em]
\item explicit max--min representation;
\item fixed-depth explicit $n$-$\MC$ DC representation $f=h-g$.
\end{enumerate}
For every $\eps\ge0$, the following problems are decidable by deterministic enumeration:
\begin{enumerate}[label=\textnormal{(\roman*)},leftmargin=2.2em]
\item deciding whether $\dist(0,\Fsub f(w))\le\eps$ (Fr\'echet-$\eps$-YES);
\item deciding whether $w$ is a local minimizer of $f$;
\item deciding whether $\dist(0,\Csub f(w))>\eps$ (Clarke-$\eps$-NO).
\end{enumerate}
The running time is $\size^{O(d)}$ under model~\textnormal{(a)} and $\size^{\Gamma(d)}$ under model~\textnormal{(b)}, where the computable function $\Gamma$ may depend on the fixed depth. Hence the listed tests are in XP. Complementing the output gives the same upper bounds for Fr\'echet-$\eps$-NO, the non-local-minimality test, and Clarke-$\eps$-YES.
\end{theorem}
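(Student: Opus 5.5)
The plan is to reduce both input models to explicit local polyhedral data at $w$ and then test the conditions by enumerating cells of a hyperplane arrangement in $\R^d$. In fixed dimension such an arrangement has only polynomially many cells.

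\textbf{Model (a).} I would first compute the active sets $I(w)$ and $J_i(w)$ exactly in rational arithmetic. By \eqref{eq:local-model} this gives the homogeneous local model $\phi_w(u)=\max_{i\in I(w)}\min_{j\in J_i(w)}\langle x_j,u\rangle$. Consider the central arrangement of the at most $k^2$ hyperplanes $\{u:\langle x_j-x_{j'},u\rangle=0\}$. On each open full-dimensional cone $C$ of this arrangement, the ordering of all $\langle x_j,u\rangle$ is fixed, so $\phi_w$ is linear on $C$ with a gradient $x_{j(C)}$ that can be read off from any interior point. A central arrangement of $m$ hyperplanes in $\R^d$ has $O(m^{d})$ cells. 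These cells, together with a rational interior point of each, can be enumerated in time $\size^{O(d)}$ by the standard incremental construction, which solves one LP per candidate sign vector. For Clarke, $\Csub f(w)=\Csub\phi_w(0)=\conv\{x_{j(C)}\}$ by Lemma~\ref{lem:pa-local-model} and the gradient characterization for PA functions. The predicate $\dist(0,\conv G)>\eps$ is then decided with a convex quadratic program over the simplex of weights, solved exactly over the rationals, or equivalently by comparing the squared distance with $\eps^2$. For Fréchet, $\Fsub\phi_w(0)=\{g:\langle g,u\rangle\le\phi_w(u)\ \forall u\}$. Since $\phi_w$ is linear on each closed cone $\bar C$, this set equals $\bigcap_C\{g:\langle g,r\rangle\le\langle x_{j(C)},r\rangle \text{ for all extreme rays } r \text{ of } \bar C\}$, a polyhedron given by polynomially many inequalities. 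Checking $\dist(0,\cdot)\le\eps$ is again a convex QP, or an LP when $\eps=0$. Local minimality is exactly the case $\eps=0$ by Lemma~\ref{lem:pa-local-model}. To avoid handling extreme rays, I would instead intersect $\bar C$ with a box and use its vertices, which are obtained by vertex enumeration in fixed dimension.

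\textbf{Model (b).} Here I would build V-representations of $X=\Csub h(w)$ and $Y=\Csub g(w)$ recursively along the $n$-$\MC$ tree. At a max node, the subdifferential is the convex hull of the subdifferentials of the active children. At a sum node, it is the Minkowski sum of the children's subdifferentials. After each step I reduce to vertices. The main obstacle is the size bound. In $\R^d$, a Minkowski sum of $J$ polytopes with at most $V$ vertices each has $O((JV)^{d-1})$ vertices, because the vertices correspond to cells of the common refinement of the normal fans. Each sum level therefore raises the current vertex count to roughly the power $d$, so for a fixed depth the count stays bounded by $\size^{\Gamma(d)}$ for some computable $\Gamma$ of order about $d^{n}$. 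I would compute a Minkowski sum by enumerating the cells of the arrangement generated by the normal-fan hyperplanes and taking, for each cell, the sum of the maximizing vertices; this costs time polynomial in the output. All coordinates stay polynomially bounded in bit length, since they are sums of leaf vectors.

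Once $X$ and $Y$ are in hand, the Fréchet test follows from Lemma~\ref{lem:frechet-translation}. The condition is $\exists s\in\eps\Ball$ with $s+y\in X$ for all $y\in\vertices(Y)$. Computing an H-representation of $X$ in fixed dimension (by convex hull, in time $\size^{\Gamma(d)}$) turns this into a convex feasibility problem in $s$: linear constraints plus $\|s\|^2\le\eps^2$, decided exactly. For Clarke, Lemma~\ref{lem:dc-clarke-slopes} gives $\Csub f(w)=\conv S_e(X,Y)$. To obtain the simultaneously exposed pairs, I would enumerate the full-dimensional cells of the common refinement of the normal fans of $X$ and $Y$, and take the maximizing vertex pair in each cell. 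Then the same distance QP as in model (a) applies. Complementing the outputs yields the three remaining tests.
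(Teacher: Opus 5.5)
Your proposal is correct and follows essentially the paper's route. For (a) you enumerate the full-dimensional cones of the slope-difference arrangement of the active local model and read off one slope per cone. For (b) you build reduced vertex lists of $X=\Csub h(w)$ and $Y=\Csub g(w)$ bottom-up (convex hulls at max nodes, Minkowski sums via normal-fan arrangements at sum nodes) and finish with exact rational convex QPs. Your variations are all valid in fixed dimension:
\begin{itemize}
\item an H-representation of $X$ instead of the paper's convex-combination multipliers $\lambda_{i,r}$;
\item enumerating cells of the common normal-fan refinement instead of one LP per vertex pair for simultaneous exposure;
\item vertex descriptions of the cones instead of the paper's Farkas form $\xi=a_C-B_C^\top\lambda_C$.
\end{itemize}

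One caution: your intermediate claim that $\Fsub\phi_w(0)$ is cut out by the extreme rays of the closed cones $\bar C$ is false when those cones are not pointed. For example, a single active slope $a$ gives the one cone $\R^d$, which has no extreme rays, so you would output $\R^d$ instead of $\{a\}$. The box-truncated version you mention is therefore needed for correctness, not just as a convenience.
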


\begin{proof}[Proof sketch]
For fixed-depth explicit $n$-$\MC$ DC input, process the canonical alternating max/sum expression trees bottom-up and compute reduced vertex lists for the convex subdifferential polytopes $X=\Csub h(w)$, $Y=\Csub g(w)$.
More explicitly, affine leaves give singleton polytopes, sum nodes give Minkowski sums, and max nodes give convex hulls of active children.  In fixed dimension, vertices of convex hulls of unions and of Minkowski sums can be enumerated from V-representations (see Lemma~\ref{lem:mc-v-growth}).  For any fixed depth, the recursive vertex output size is bounded by $\size^{\Gamma(d)}$ for some computable depth-dependent exponent $\Gamma$, giving the claimed XP bound.

Lemma~\ref{lem:frechet-translation} reduces the Fr\'echet test to finding
\(\xi\in\R^d\) with \(\|\xi\|\le\eps\) and \(\xi+Y\subseteq X\).
Write \(\vertices(X)=\{x_1,\dots,x_N\}\) and
\(\vertices(Y)=\{y_1,\dots,y_m\}\).  Since \(X=\conv\{x_1,\dots,x_N\}\)
and \(Y=\conv\{y_1,\dots,y_m\}\), the containment \(\xi+Y\subseteq X\)
is equivalent to \(\xi+y_i\in X\) for all \(i\in[m]\).  Equivalently, for
each \(i\in[m]\) there are coefficients \(\lambda_{i,r}\ge0\) such that
\(\xi+y_i=\sum_{r=1}^N\lambda_{i,r}x_r\) and
\(\sum_{r=1}^N\lambda_{i,r}=1\).  Thus Fr\'echet-\(\eps\)-YES is decided
by minimizing \(\|\xi\|_2^2\) over this rational polyhedron in the variables
\((\xi,\lambda)\) and comparing the optimum with \(\eps^2\). For Clarke-\(\eps\)-NO, the local PA model is
\(\phi(u)=\sigma_X(u)-\sigma_Y(u)\).  By
Lemma~\ref{lem:dc-clarke-slopes}, the essentially active slopes are exactly
the differences \(x-y\) over simultaneously exposed vertex pairs.  After
enumerating these slopes into \(S_e\), Clarke-\(\eps\)-NO is decided by
minimizing \(\|\sum_{a\in S_e}\lambda_a a\|_2^2\) over the simplex
\(\lambda_a\ge0,\ \sum_{a\in S_e}\lambda_a=1\), and comparing the optimum
with \(\eps^2\).
By Lemma~\ref{lem:pa-local-model}, local minimality is exactly the Fr\'echet-\(0\)-YES test for continuous PA functions, so it is covered by the same computation.

For explicit max--min input, first compute the homogeneous local model in~\eqref{eq:local-model}.
Let $A$ be the set of distinct active slopes appearing in that model.  Form the hyperplanes
$\langle a-a',u\rangle=0$ for distinct $a,a'\in A$.  These hyperplanes divide the direction space into open polyhedral cones.  We enumerate the full-dimensional ones.  On each such cone $C$, all comparisons among the numbers $\langle a,u\rangle$ are fixed, so every inner minimum and the outer maximum in~\eqref{eq:local-model} select fixed slopes and $\phi_w(u)=\langle a_C,u\rangle$ on $C$.
Let $K_C=\overline C$ and write it as $K_C=\{u:B_Cu\ge0\}$.  Then a vector $\xi$ belongs to $\Fsub\phi_w(0)$ exactly when, for every enumerated cone $C$, the inequality
$\langle \xi-a_C,u\rangle\le0$ holds for all $u\in K_C$.  This is equivalent to the existence of multipliers $\lambda_C\ge0$ satisfying $\xi=a_C-B_C^\top\lambda_C$.  Minimizing $\|\xi\|_2^2$ over the resulting rational polyhedron and comparing the optimum with $\eps^2$ gives a rational convex QP of size $\size^{O(d)}$.
For Clarke-$\eps$-NO, the same cone enumeration gives the essentially active slope set $S_e=\{a_C:C\text{ is a full-dimensional cone}\}$.  Since $\Csub f(w)=\conv(S_e)$, Clarke-$\eps$-NO is decided by minimizing the QP
$\min\{\|\sum_{a\in S_e}\lambda_a a\|_2^2:\lambda_a\ge0,\sum_{a\in S_e}\lambda_a=1\}$ and comparing the optimum with $\eps^2$.  All subproblems above have size $\size^{O(d)}$ under max--min input, or $\size^{\Gamma(d)}$ under fixed-depth $n$-$\MC$ DC input, and are polynomial-time solvable in that enumerated size.  Hence they give XP algorithms. Full proofs are deferred to Appendix~\ref{app:fixed-d-proofs}.
\end{proof}

\section{Parameterized hardness with respect to the input dimension}\label{sec:hardness}

We next show the parameterized W[1]-hardness and ETH lower bounds for our testing problems. 

\begin{theorem}\label{thm:w1-hard}
Let $d$ denote the input dimension. For each of the following input models
\begin{enumerate}[label=\textnormal{(\alph*)},leftmargin=2.2em]
\item explicit max--min representation;
\item explicit $n$-$\MC$ DC representation of constant depth (indeed depth two),
\end{enumerate}
the following decision problems are W[1]-hard with respect to $d$:
\begin{enumerate}[label=\textnormal{(\roman*)},leftmargin=2.2em]
\item for every fixed $\eps\ge0$, deciding whether $\dist(0,\Fsub f(w))>\eps$ ($d$-Fr\'echet-$\eps$-NO);
\item deciding whether $w$ is not a local minimizer of $f$;
\item for every fixed $\eps\in[0,1/2)$, deciding whether $\dist(0,\Csub f(w))\le\eps$ ($d$-Clarke-$\eps$-YES).
\end{enumerate}
Consequently, the corresponding Fr\'echet-$\eps$-YES and Clarke-$\eps$-NO polarities are co-W[1]-hard.  Finally, unless ETH fails, none of the hard problems above admits an algorithm of running time $\rho(d)\size^{o(d)}$ for any computable function $\rho$.
Combined with the $\size^{O(d)}$ max--min enumeration bound in Theorem~\ref{thm:fixed-d-xp}, this rules out improving the exponent of the natural max--min enumeration approach to $o(d)$ on these W[1]-hard tests.
\end{theorem}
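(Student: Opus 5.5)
The plan is to give parameter-preserving reductions from Multicolored $k$-Clique, which is W[1]-hard and ETH-hard in the same $\rho(k)N^{o(k)}$ sense as $k$-Clique. Each reduction produces a homogeneous local model at $w=0$ in dimension $d=\Theta(k)$ with encoding size $N^{O(1)}$. The ETH statement then follows as described in the preliminaries, and the co-W[1] statements follow by complementation. Item~(ii) is the special case $\eps=0$ of item~(i), by Lemma~\ref{lem:pa-local-model}. So it suffices to handle items~(i) and~(iii) for models~(a) and~(b).

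\textbf{Geometric encoding.} I would use two coordinates per color class $c\in[k]$. The $N$ vertices of class $c$ are placed as distinct rational points $p_v$ on a circle, or on a convex rational curve, in $\R^2$. Choosing a vertex of class $c$ then corresponds to a direction $u_c\in\R^2$ that exposes $p_v$. A direction $u=(u_1,\dots,u_k)\in\R^{2k}$ thus encodes a choice of one vertex per color. Edges are checked pairwise: for each pair $c<c'$, a gadget living in the four coordinates $(u_c,u_{c'})$ contributes a penalty piece that is active precisely when the selected pair $(v,v')$ is a non-edge.

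\textbf{Model~(b), Fr\'echet side.} Here I would take $g$ to be a depth-two $\MC$ term. Concretely, $g$ is a sum over colors and pairs of maxima of a few affine pieces, so that $Y=\Csub g(0)$ is a Minkowski sum of small gadget polytopes. I would take $h$ to be a flat max, so that $X$ has an explicitly listed vertex set. By Lemma~\ref{lem:frechet-translation}, Fr\'echet-$0$-NO is equivalent to $Y\not\subseteq X$. This in turn is equivalent to the existence of $u$ with $\sigma_Y(u)>\sigma_X(u)$. Since $\sigma_Y$ decomposes as a sum of the gadget support functions, the calibration is as follows. Per-color gadgets reward a clean vertex selection. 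Pair gadgets reward edges. The facets of $X$ are set so that the inequality $\sigma_Y(u)>\sigma_X(u)$ can hold only when all $\binom{k}{2}$ pair rewards are collected, that is, only when the selected vertices form a clique. To pass from $\eps=0$ to a fixed $\eps>0$, I would make both $X$ and $Y$ centrally symmetric. Then $s+Y\subseteq X$ implies $-s+Y\subseteq X$, hence $Y\subseteq X$ by convexity, so the best translate is $s=0$. After that, scaling the whole instance by a polynomial factor turns the integral violation gap into a violation larger than $\eps$.

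\textbf{Model~(a) and the Clarke side.} For max--min input I would write the same local model directly in the form $\max_i\min_j\langle x_j,u\rangle$. This requires restructuring the gadget so that the number of affine pieces stays polynomial. The idea is that each outer max term corresponds to one choice of a color-gadget vertex, and its inner min ranges over the polynomially many pairwise constraints involving that color. Nonnegativity of $\phi$ in all directions then fails exactly when a clique-consistent direction exists. For item~(iii) I would use Lemma~\ref{lem:dc-clarke-slopes}, together with its max--min analogue, namely the full-dimensional-cone slope set $S_e$. I would design the instance so that each essentially active slope has norm at least $1$, and so that $0\in\conv(S_e)$ is possible only if the slopes from all pair gadgets can be balanced. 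That balancing should in turn force simultaneous exposure of a clique configuration. The threshold $\eps<1/2$ would come from normalizing the slopes. In the no-clique case, every convex combination should retain a unit coordinate split evenly at worst, giving distance at least $1/2$.

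\textbf{Main obstacle.} The step I expect to be hardest is the soundness direction. I must show that directions $u$ which do not cleanly select one vertex per color cannot create a spurious violation $\sigma_Y(u)>\sigma_X(u)$, and likewise cannot create spurious simultaneously exposed pairs that balance to $0$. My first attempt would be to use strictly convex point placements, which force rewards to be maximized only at extreme-point selections. I would combine this with a large multiplicative weight on the color gadgets relative to the pair gadgets, so that any non-clean direction loses more on the color terms than it can gain from the pair terms.
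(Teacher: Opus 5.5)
Your framework is right: a clique-type reduction with $d=\Theta(k)$, points on a convex curve, Lemma~\ref{lem:frechet-translation}, item (ii) as the case $\eps=0$, and the ETH transfer. There is still a genuine gap. You never construct $X$, $Y$, or a max--min formula. The soundness direction you defer as the ``main obstacle'' is the entire content of the reduction, and the reward/weight calibration is not shown to work.

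Your chosen architecture also blocks model~(a). With $h$ a flat max and $g$ a sum of maxima over many colour and pair gadgets, the direct expansion $h-g=\max_i\min_{(a_\gamma)_\gamma}\bigl(\ell_i-\sum_\gamma m_{\gamma,a_\gamma}\bigr)$ minimizes over the product of all gadget choices, which is exponential. The ``restructuring'' you allude to is not given.

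The missing idea is to make every max branch of $h$ agree with $g$ on all but two blocks. Take $Q=\conv\{(v,v^2):v\in[N]\}$ and $g(z)=\sum_r\sigma_Q(z_r)$. For $i<j$ and $(u,v)\in\Forb$, set $\psi_{i,j,u,v}(z)=\langle p_u,z_i\rangle+\langle p_v,z_j\rangle+\sum_{r\ne i,j}\sigma_Q(z_r)$. Then $\psi_\alpha-g=\min_{a,b\in[N]}\bigl(\langle p_u-p_a,z_i\rangle+\langle p_v-p_b,z_j\rangle\bigr)$ has only $N^2$ pieces. So $h=\max_\alpha\psi_\alpha$ yields a polynomial max--min form and a depth-two $\MC$ form simultaneously.

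Geometrically, $X=\conv\bigcup_\alpha F_\alpha$ is the hull of the faces of $Y=Q^k$ that pin a forbidden pair. Hence $X\subseteq Y$ automatically, and $X$ omits exactly the exposed clique vertices. It follows that $X=Y$ iff there is no clique. In the clique case $\vol(X)<\vol(Y)$, which rules out every translate. So $\Fsub f_F(0)$ is $\{0\}$ (no clique) or $\emptyset$ (clique), and all $\eps\ge0$ are handled at once. Your central-symmetry trick would likewise give $\Fsub f=\emptyset$ whenever $Y\not\subseteq X$, so your subsequent scaling step is unnecessary: the distance is then $+\infty$ regardless of any gap.

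The Clarke side is also unspecified, and your heuristics do not produce the threshold. Lower bounds on the norms of essentially active slopes say nothing about $\dist(0,\conv S_e)$. ``Balancing pair-gadget slopes forces a clique'' is exactly what a construction would have to prove.

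The paper needs no new gadget here; it uses the seesaw $f_C(z,t)=t/2+\max\{f_F(z),-|t|/2\}=\max\{f_F(z)+t/2,\min\{t,0\}\}$. If there is no clique, $f_F\equiv0$, so $f_C=t/2$ and $\Csub f_C(0,0)=\{(0,1/2)\}$, at distance exactly $1/2$. If there is a clique $(v_1,\dots,v_k)$, take $z_i=T(2v_i,-1)$ with $T>1$; then $f_F(z)<-1$. On a full-dimensional open cone through $(z,1)$ the branch $-|t|/2$ is strictly selected, so $f_C\equiv0$ there. The zero slope is therefore essentially active, and $0\in\Csub f_C(0,0)$.

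This argument works for any positively homogeneous PA $f_F$ that is nonnegative on no-instances and negative somewhere on yes-instances. That is why Clarke hardness, including the $1/2$ threshold, comes for free once the Fr\'echet gadget is in place.
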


\begin{proof}[Proof sketch]
We reduce from $k$-Clique.
Let $V(G)=[N]$, set $p_v=(v,v^2)\in\Z^2$, and put $Q=\conv\{p_v:v\in[N]\}\subseteq\R^2$.
For ordered pairs, write $(u,v)\in\Forb$ when $u=v$ or $\{u,v\}\notin E(G)$; a $k$-tuple is a clique precisely when none of its pairs is forbidden.
In block coordinates $z=(z_1,\dots,z_k)\in(\R^2)^k$, define $g(z)=\sum_{i=1}^k \sigma_Q(z_i)$, $Y:=\Csub g(0)=Q^k$.
For $1\le i<j\le k$ and $(u,v)\in\Forb$, define
\[
 \psi_{i,j,u,v}(z):=\langle p_u,z_i\rangle+\langle p_v,z_j\rangle+
 \sum_{r\in[k]\setminus\{i,j\}}\sigma_Q(z_r),
 \qquad
 h(z):=\max_{i<j,\ (u,v)\in\Forb}\psi_{i,j,u,v}(z),
\]
and let $X:=\Csub h(0)$.
Each $\psi_{i,j,u,v}$ is the support function of a product face of $Y$ with the $i$-th block fixed to $p_u$ and the $j$-th block fixed to $p_v$.
Thus $X\subseteq Y$, and a vertex $(p_{v_1},\dots,p_{v_k})$ of $Y$ belongs to $X$ exactly when some pair $(v_i,v_j)$ is forbidden.
It follows that $X=Y$ if and only if $G$ has no $k$-clique.
If $G$ has a $k$-clique, then $X$ is the convex hull of a strict subset of the vertices of the full-dimensional polytope $Y$; hence $\vol(X)<\vol(Y)$, and no translate of $Y$ can be contained in $X$.

For the Fr\'echet side, set $f_F:=h-g$.
Lemma~\ref{lem:frechet-translation} gives $\Fsub f_F(0)=\{s:s+Y\subseteq X\}$.
Therefore no-clique instances satisfy $\Fsub f_F(0)=\{0\}$, while clique instances satisfy $\Fsub f_F(0)=\emptyset$.
This proves W[1]-hardness of $d$-Fr\'echet-$\eps$-NO for every fixed $\eps\ge0$.
Since the gadget is continuous PA, Lemma~\ref{lem:pa-local-model} also turns the same dichotomy into W[1]-hardness of deciding non-local-minimality at the origin.
The explicit max--min form for $f_F(z)$ follows from $\langle p_u,z_i\rangle-\sigma_Q(z_i)=\min_{a\in[N]}\langle p_u-p_a,z_i\rangle$, giving
\[
 f_F(z)=\max_{i<j,\ (u,v)\in\Forb}\ \min_{a,b\in[N]}
 \bigl(\langle p_u-p_a,z_i\rangle+\langle p_v-p_b,z_j\rangle\bigr).
\]
The same construction has an explicit $n$-$\MC$ DC representation of constant depth, indeed depth two.

For the Clarke side, add one coordinate and define $f_C(z,t):=t/2+\max\{f_F(z),-|t|/2\}$.
If the graph has no $k$-clique, then $h=g$ and $f_F\equiv0$, so $f_C(z,t)=t/2$ and $\dist(0,\Csub f_C(0,0))=1/2$.
If the graph has a $k$-clique $(v_1,\dots,v_k)$, the exposing directions $r_i=(2v_i,-1)$ can be scaled so that $f_F(z)<-1$.
Then the branch $-|t|/2$ is strictly selected on a full-dimensional cone through $(z,1)$, and $f_C$ is identically zero on that cone; hence the zero gradient is essentially active and $0\in\Csub f_C(0,0)$.
This proves W[1]-hardness of $d$-Clarke-$\eps$-YES for every fixed $\eps\in[0,1/2)$.
All constructions and representation expansions run in time $f(k)|(G,k)|^{O(1)}$ and produce dimension $d=2k$ for Fr\'echet and $d=2k+1$ for Clarke; hence they are valid parameterized reductions.  With polynomial encoding growth, the ETH lower bounds follow.
Full proofs are given in Appendix~\ref{app:hardness-proofs}.
\end{proof}

\section{Extension to stationarity testing for ReLU CNN training losses}\label{sec:cnn}
We now connect our PA hardness results to neural-network training by showing that the same parameterized-complexity picture holds for stationarity testing of a concrete family of shallow CNN training losses.
Following the CNN embedding construction of \citet{TianSo2025}, we consider a shallow ReLU/max-pooling CNN with one fixed input tensor and a single trainable no-bias \(1\times1\) convolutional filter; all linear readouts and pooling operations are fixed and rationally encoded. 

Once the architecture and the single training example are fixed, the loss is a scalar PA function of the filter parameter \(\theta\in\R^p\).
We write \(M\) for the encoding size of the fixed tensor, fixed rational weights, pooling structure, designated parameter point, and the stationarity approximation tolerance \(\eps\ge 0\) when it is part of the input.
The formulas below define the two losses used in the reduction; Appendix~\ref{app:cnn-layer-realization} gives their exact layer-by-layer realization by the stated CNN operations.

Let \(G=([N],E)\) be a \(k\)-Clique instance, set
\(p_v:=(v,v^2)\in\Z^2\) for \(v\in[N]\), and write
\(\mathcal N_{\!1}(u):=\{a\in[N]:|a-u|=1\}\).
Define the local ReLU penalty \(P_u(s):=\sum_{a\in\mathcal N_{\!1}(u)}
\relu(\langle p_a-p_u,s\rangle)\) for \(u\in[N]\) and \(s\in\R^2\).
This penalty has a separation property: if \(u\in\argmax_{a\in[N]}\langle p_a,s\rangle\), then \(P_u(s)=0\), while for \(r_c=(2c,-1)\), \(P_c(r_c)=0\) and \(P_u(r_c)\ge1\) whenever \(u\ne c\); see Appendix~\ref{app:cnn-penalty-proof}.
Let \(\Forb:=\{(u,v)\in[N]^2:u=v\text{ or }\{u,v\}\notin E\}\), and write
\(z=(z_1,\ldots,z_k)\in(\R^2)^k\), where \(z_i\) is associated with the \(i\)-th clique position.

The Fr\'echet-side loss is
\begin{equation}\label{eq:cnn-frechet-loss}
    L_G^{\mathrm F}(z):=
    \max_{\substack{1\le i<j\le k\\(u,v)\in\Forb}}
    \bigl[-P_u(z_i)-P_v(z_j)\bigr].
\end{equation}
The Clarke-side loss uses one additional filter coordinate \(t\) and is
\begin{equation}\label{eq:cnn-clarke-loss}
    L_G^{\mathrm C}(z,t):=
    \frac{t}{2}+\max\left\{L_G^{\mathrm F}(z),
    -\frac12\relu(t)-\frac12\relu(-t)\right\}
    =
    \frac{t}{2}+\max\left\{L_G^{\mathrm F}(z),-\frac{|t|}{2}\right\}.
\end{equation}

\begin{theorem}[Parameterized hardness for shallow CNN losses]\label{thm:cnn-hardness}
For the CNN family in~\eqref{eq:cnn-frechet-loss}--\eqref{eq:cnn-clarke-loss}, with stationarity tested at the origin of the trainable parameter space and parameterized by the trainable dimension \(p=\dim(\theta)\), the following hold.
\begin{enumerate}[label=\textnormal{(\roman*)},leftmargin=2.2em]
\item For \(L_G^{\mathrm F}\), for every fixed \(\eps\ge0\), Fr\'echet-\(\eps\)-NO is W[1]-hard with respect to \(p\).
\item For \(L_G^{\mathrm F}\), deciding whether the origin is not a local minimizer is W[1]-hard with respect to \(p\).
\item For \(L_G^{\mathrm C}\), for every fixed \(\eps\in[0,1/2)\), Clarke-\(\eps\)-YES is W[1]-hard with respect to \(p\).
\end{enumerate}
Moreover, under ETH, none of these hard problems admits an algorithm running in time
\(\rho(p)M^{o(p)}\) for any computable function \(\rho\), where \(M\) denotes the CNN encoding size.
\end{theorem}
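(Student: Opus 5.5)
The plan is to reduce from $k$-Clique exactly as in Theorem~\ref{thm:w1-hard}. The one change is that I will analyze the losses $L_G^{\mathrm F}$ and $L_G^{\mathrm C}$ directly, using the separation property of the penalties $P_u$ stated before~\eqref{eq:cnn-frechet-loss}. I will not pass through the polytopes $X,Y$. Two elementary facts are used throughout.
\begin{itemize}[leftmargin=2.2em]
\item Every $P_u$ is a nonnegative sum of ReLUs of linear forms, so it is positively homogeneous. Hence $L_G^{\mathrm F}$ is positively homogeneous, $L_G^{\mathrm F}(0)=0$, and $L_G^{\mathrm F}\le 0$ everywhere.
\item By Lemma~\ref{lem:pa-local-model}, the function is its own local model at the origin. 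So $\Fsub L_G^{\mathrm F}(0)=\{g:\langle g,z\rangle\le L_G^{\mathrm F}(z)\ \forall z\}$, and $0\in\Fsub L_G^{\mathrm F}(0)$ iff the origin is a local minimizer.
\end{itemize}

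\textbf{Fréchet side, no clique.} Take any $z$ and choose $u_i\in\argmax_{a\in[N]}\langle p_a,z_i\rangle$ for each $i$. The separation property gives $P_{u_i}(z_i)=0$. Since $G$ has no $k$-clique, the tuple $(u_1,\dots,u_k)$ contains a forbidden pair $(u_i,u_j)$ with $i<j$; a repeated vertex counts as forbidden. The corresponding term gives $L_G^{\mathrm F}(z)\ge 0$. Hence $L_G^{\mathrm F}\equiv 0$, the origin is a global minimizer, and $\dist(0,\Fsub L_G^{\mathrm F}(0))=0\le\eps$.

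\textbf{Fréchet side, clique.} Let $(c_1,\dots,c_k)$ be a clique and set $z^\ast_i:=r_{c_i}=(2c_i,-1)$. For each $i<j$ and each $(u,v)\in\Forb$, we have $(u,v)\ne(c_i,c_j)$ because $(c_i,c_j)\notin\Forb$. So either $u\ne c_i$ or $v\ne c_j$, and the separation property gives $P_u(z^\ast_i)+P_v(z^\ast_j)\ge1$. Therefore $L_G^{\mathrm F}(z^\ast)\le -1$. By homogeneity $L_G^{\mathrm F}(tz^\ast)\le -t$, so the origin is not a local minimizer.

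Moreover $\Fsub L_G^{\mathrm F}(0)=\emptyset$. Indeed, any $g$ in it would satisfy $\langle g,z^\ast\rangle\le -1$ and $\langle g,-z^\ast\rangle\le L_G^{\mathrm F}(-z^\ast)\le 0$. Adding these gives $0\le -1$, which is impossible. Hence $\dist(0,\Fsub L_G^{\mathrm F}(0))=+\infty>\eps$ for every fixed $\eps\ge0$. This yields (i) and (ii) with $p=2k$.

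\textbf{Clarke side, no clique.} We have $L_G^{\mathrm F}\equiv 0\ge -|t|/2$, so $L_G^{\mathrm C}(z,t)=t/2$. Its Clarke subdifferential at the origin is $\{(0,1/2)\}$, at distance $1/2>\eps$.

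\textbf{Clarke side, clique.} At the point $(z^\ast,1)$ we have $L_G^{\mathrm F}(z^\ast)\le-1<-1/2=-|t|/2$. By continuity this strict inequality holds on an open neighborhood of $(z^\ast,1)$, and by homogeneity on the open cone it generates. On that cone $L_G^{\mathrm C}=t/2-t/2\equiv0$. For PA functions the Clarke subdifferential is the convex hull of the gradients on full-dimensional pieces adjacent to the point (the local model of Lemma~\ref{lem:pa-local-model}, as used in Lemma~\ref{lem:dc-clarke-slopes} and Theorem~\ref{thm:fixed-d-xp}). The cone contains the origin in its closure, so the zero gradient is essentially active and $0\in\Csub L_G^{\mathrm C}(0)$. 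This gives (iii) with $p=2k+1$ for every fixed $\eps\in[0,1/2)$.

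\textbf{Reduction and ETH.} The formulas use $O(k^2N^2)$ max-terms, each a sum of $O(1)$ ReLUs with integer coefficients of size $O(N^2)$. Appendix~\ref{app:cnn-layer-realization} realizes them by a single trainable no-bias $1\times1$ filter with fixed rational readouts and max-pooling. The map $G\mapsto$ (network, origin, $\eps$) is then computable in time $\poly(N,k)$, with $M\le N^{O(1)}$ and $p=\Theta(k)$. So these are parameterized reductions from $k$-Clique. An algorithm running in time $\rho(p)M^{o(p)}$ would solve $k$-Clique in time $\rho'(k)N^{o(k)}$, contradicting ETH.

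\textbf{Expected obstacle.} The analytic part is short once the penalty separation property is available. The step I expect to be delicate is the faithful CNN realization with polynomial encoding: the $\max$ over $\Forb$, the negated ReLU sums, and the $-|t|/2$ branch must all be produced by fixed pooling and readout layers acting on a single $1\times1$ filter, without changing the function or the trainable dimension. I would take this from Appendix~\ref{app:cnn-layer-realization}, and verify there only that the realized loss equals~\eqref{eq:cnn-frechet-loss}--\eqref{eq:cnn-clarke-loss} identically and that all fixed weights have $O(\log N)$ bits.
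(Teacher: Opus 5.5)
Your proposal is correct and follows the paper's proof essentially step by step: the same $k$-Clique reduction, the same use of the penalty separation property in the no-clique and clique cases, the same seesaw-cone argument for the Clarke side, and the same parameter/ETH accounting. The only differences are cosmetic. You take $T=1$ rather than $T>1$. You also show $\Fsub L_G^{\mathrm F}(0)=\emptyset$ by adding the subgradient inequalities at $z^\ast$ and $-z^\ast$, whereas the paper tests $d=g$ to force $g=0$ and then contradicts $L_G^{\mathrm F}\ge 0$.
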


\begin{proof}[Proof sketch]
We reduce from \(k\)-Clique.
For the Fr\'echet side, use \(L_G^{\mathrm F}\) with trainable parameter
\(z\in(\R^2)^k\). If \(G\) has no \(k\)-clique, then for arbitrary \(z\)
choose \(v_i\in\argmax_{a\in[N]}\langle p_a,z_i\rangle\) for each clique
position \(i\). Since \((v_1,\ldots,v_k)\) is not a clique, some
\((v_i,v_j)\in\Forb\). For the term indexed by \((i,j,v_i,v_j)\), the
separation property (see Lemma~\ref{lem:cnn-penalty}) gives \(P_{v_i}(z_i)=P_{v_j}(z_j)=0\), while every term
in~\eqref{eq:cnn-frechet-loss} is nonpositive. Hence
\(L_G^{\mathrm F}\equiv0\) and \(\Fsub L_G^{\mathrm F}(0)=\{0\}\).

If \(G\) has a \(k\)-clique \((c_1,\ldots,c_k)\), take
\(z_i=T(2c_i,-1)\). For any term indexed by \((i,j,u,v)\), the pair
\((u,v)\) is forbidden whereas \((c_i,c_j)\) is allowed, so
\(u\ne c_i\) or \(v\ne c_j\). The separation property gives
\(P_u(z_i)\ge T\) in the first case or \(P_v(z_j)\ge T\) in the second,
and therefore \(L_G^{\mathrm F}(z)\le -T<0\). Since
\(L_G^{\mathrm F}\le0\) everywhere and is positively homogeneous, any
\(g\in\Fsub L_G^{\mathrm F}(0)\) would satisfy
\(\langle g,d\rangle\le L_G^{\mathrm F}(d)\) for all \(d\). Taking
\(d=g\) gives \(g=0\), but \(0\in\Fsub L_G^{\mathrm F}(0)\) would imply
\(L_G^{\mathrm F}\ge0\), contradicting the negative direction above.
Thus \(\Fsub L_G^{\mathrm F}(0)=\emptyset\), proving the Fr\'echet hard side.
Since \(L_G^{\mathrm F}\) is continuous PA, Lemma~\ref{lem:pa-local-model} also says that the no-clique and clique cases are exactly the local-minimum and non-local-minimum cases.

For the Clarke side, use \(L_G^{\mathrm C}\) with trainable parameter
\((z,t)\). If \(G\) has no \(k\)-clique, then \(L_G^{\mathrm F}\equiv0\),
so \(L_G^{\mathrm C}(z,t)=t/2\) and
\(\dist(0,\Csub L_G^{\mathrm C}(0,0))=1/2\). If \(G\) has a \(k\)-clique,
the Fr\'echet construction gives \(z^\star\) with
\(L_G^{\mathrm F}(z^\star)<-1\). Near \((z^\star,1)\), the
\(-|t|/2\) alternative is strictly selected in~\eqref{eq:cnn-clarke-loss},
so \(L_G^{\mathrm C}(z,t)=t/2-t/2=0\). By positive homogeneity, this yields a full-dimensional cone whose closure contains the origin on which
\(L_G^{\mathrm C}\) is affine with gradient zero; hence the zero gradient is essentially active and
\(0\in\Csub L_G^{\mathrm C}(0,0)\).
This proves Clarke-\(\eps\)-YES hardness for every fixed
\(\eps\in[0,1/2)\).

The CNN instances can be assembled in time \(f(k)|(G,k)|^{O(1)}\), with \(M=\poly(N,k)\) and \(p=\Theta(k)\), so these are valid parameterized reductions.  The ETH lower bounds follow from the standard \(k\)-Clique lower bound. Full details are in
Appendix~\ref{app:cnn-hardness-proof}.
\end{proof}

\begin{corollary}[Complete parameterized picture]\label{cor:cnn-complete}
For the CNN losses in~\eqref{eq:cnn-frechet-loss}--\eqref{eq:cnn-clarke-loss}, parameterized by the trainable dimension \(p\), all four stationarity polarities are decidable in time \(M^{O(p)}\), hence in XP.
\end{corollary}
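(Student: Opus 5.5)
The plan is to reduce the corollary to Theorem~\ref{thm:fixed-d-xp}(a). Concretely, I will rewrite each CNN loss $L_G^{\mathrm F}$ and $L_G^{\mathrm C}$ in explicit max--min form, of size polynomial in $M$ and in dimension $p$. The fixed-dimension enumeration bound $\size^{O(p)}$ then applies with $\size\le M^{O(1)}$, which gives $M^{O(p)}$. Complementing outputs covers the other two polarities, as in the last sentence of that theorem.

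\textbf{Step 1: max--min form of $L_G^{\mathrm F}$.}
\begin{itemize}
\item Each $-P_u(s)$ is a sum of at most two terms $-\relu(\langle p_a-p_u,s\rangle)=\min\{0,\langle p_u-p_a,s\rangle\}$.
\item A sum of two two-term minima of affine maps is a single minimum over the four pairwise sums. Since $|\mathcal N_1(u)|\le2$, $-P_u(z_i)$ is a min of at most $4$ linear forms.
\item Likewise $-P_u(z_i)-P_v(z_j)$ is a min of at most $16$ linear forms in $z$.
\item The outer max over $i<j$ and $(u,v)\in\Forb$ has at most $k^2N^2$ branches.
\end{itemize}
So $L_G^{\mathrm F}$ is an explicit max--min function with $O(k^2N^2)$ min-blocks and $O(k^2N^2)$ affine pieces. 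All coefficients are integers of magnitude $O(N^2)$, so the encoding length is $\poly(N,k)$. I will also need this to be $\poly(M)$. Since the problem is parameterized by the CNN encoding $M$, I will argue $N\le M$ and $k\le p\le M$: the architecture must at least list the pooling/readout data indexed by $\Forb$ and the $p$ filter coordinates. A cleaner route is to extract the max--min formula directly from the layer-by-layer realization in Appendix~\ref{app:cnn-layer-realization}, by expanding each ReLU and max-pooling node.

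\textbf{Step 2: max--min form of $L_G^{\mathrm C}$.} Write $-|t|/2=\min\{t/2,-t/2\}$ and add $t/2$ inside every affine piece, which distributes through the max and min. This gives
\[
L_G^{\mathrm C}=\max\Bigl\{\max_{\text{branches}}\min_{\text{pieces}}\bigl(\alpha+t/2\bigr),\ \min\{t,0\}\Bigr\},
\]
which is again explicit max--min in dimension $p=2k+1$ with polynomial size. Stationarity at $\theta_0=0$ (with $w=0$ as query point) is then tested on this function, and $\eps$ is carried along in the encoding.

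\textbf{Main obstacle.} The genuine subtlety is the general claim implicit in "the CNN losses": the statement should cover the family as actually encoded by CNN architectures, not only these formulas. I would handle this by a generic translation. Every ReLU and max-pooling node, combined with fixed linear readouts, yields a PA function. I would then compute the local model at $\theta_0$ directly, by propagating active index sets and directional derivatives layer by layer, rather than fully expanding the network, since full expansion can blow up exponentially. I expect this to be the delicate part. For the specific family in~\eqref{eq:cnn-frechet-loss}--\eqref{eq:cnn-clarke-loss}, the direct expansion above has bounded fan-in, so it stays polynomial, and I would restrict the corollary's proof to that.
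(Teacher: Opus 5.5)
Your proposal is correct and follows the paper's proof: the paper likewise rewrites each branch via $-\relu(\ell)=\min_{\omega\in\{0,1\}}(-\omega\ell)$ into at most $16$ affine pieces, uses $t/2-|t|/2=\min\{t,0\}$ to get an explicit max--min form for the Clarke loss, applies Theorem~\ref{thm:fixed-d-xp}(a), and complements the output for the other two polarities. Your discussion of general CNN architectures is not needed for the stated family, and your check that the formula size is polynomial in $M$ is somewhat more explicit than the paper's, which simply asserts it.
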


The proof is deferred to Appendix~\ref{app:cnn-maxmin-xp-proof}.
Together with Theorem~\ref{thm:cnn-hardness}, this makes the \(M^{O(p)}\) upper bound ETH-tight for Fr\'echet-\(\eps\)-NO and Clarke-\(\eps\)-YES in the stated ranges of \(\eps\). 

Hence, the parameterized-complexity picture developed in
Sections~\ref{sec:fixed-d}--\ref{sec:hardness} carries over to stationarity testing for a simple family of CNN training losses. This reinforces the intractability of neural-network training from a local-verification perspective: even for shallow ReLU CNN losses, testing stationarity and local minimality can be W[1]-hard with respect to the number of trainable parameters.

\section{Conclusion}
This paper addresses a basic and foundational problem in nonsmooth optimization: deciding approximate first-order stationarity at a prescribed point for continuous PA functions.
For the input models considered here, we resolve the fixed-dimensional tractability picture from a parameterized-complexity viewpoint.
Fr\'echet-\(\eps\)-YES and Clarke-\(\eps\)-NO admit deterministic XP algorithms in the ambient dimension \(d\), and all four tests are in XP by complementing the output.
Conversely, Fr\'echet-\(\eps\)-NO for \(\eps\ge0\) and Clarke-\(\eps\)-YES for \(\eps\in[0,1/2)\) are W[1]-hard and admit no \(\rho(d)\size^{o(d)}\)-time algorithms under ETH; for explicit max--min input, this matches the \(\size^{O(d)}\) enumeration dependence.

\textbf{Limitations.} Our results are worst-case complexity results for explicitly represented PA functions and a shallow CNN family.
The lower bounds do not rule out faster algorithms under additional geometric regularity, average-case assumptions, smoothed input models, or more specialized neural architectures.


\bibliographystyle{plainnat}
\bibliography{references}

\clearpage
\appendix

\section{Supplementary material for Section~\ref{sec:prelim}}\label{app:section2-background}

\subsection{A geometric guide to Clarke and Fr\'echet stationarity}\label{app:stationarity-guide}
This subsection gives a geometric reading of the two stationarity notions used in the paper. The general definitions follow the standard nonsmooth-analysis viewpoint; see, for example, \citet{Clarke1990,RockafellarWets1998} and the monograph of \citet{CuiPang2021}. The key distinction is that Fr\'echet stationarity is governed by first-order local lower supports centered at the base point, whereas Clarke stationarity, for locally Lipschitz functions, is governed by the convex hull of limiting gradients from nearby differentiability points.

\paragraph{General definitions.}
For an extended-real function that is finite at $x$, the Fr\'echet, or regular, subdifferential can be written as
\[
 \Fsub f(x)=\left\{v\in\R^d:
 \liminf_{y\to x,\ y\ne x}
 \frac{f(y)-f(x)-\langle v,y-x\rangle}{\|y-x\|}\ge0\right\}.
\]
Thus $v\in\Fsub f(x)$ means that the affine function $y\mapsto f(x)+\langle v,y-x\rangle$ is a local lower support up to an $o(\|y-x\|)$ error.  In the convex case this reduces to the usual global supporting-hyperplane inequality.  For the continuous PA functions in this paper, the exact local fan model turns this into the directional inequality $\langle v,u\rangle\le f'(x;u)$ for every direction $u$.

For a locally Lipschitz function, Rademacher's theorem gives differentiability almost everywhere.  If $D_f$ denotes the set of differentiability points of $f$, the Clarke subdifferential has the limiting-gradient representation
\[
 \Csub f(x)=\cl\conv\left\{\lim_{r\to\infty}\nabla f(x_r):
     x_r\in D_f,\ x_r\to x,\ \nabla f(x_r)\text{ converges}\right\}.
\]
Equivalently, it is the compact convex set whose support function is the Clarke directional derivative $f^\circ(x;u)$.  For PA functions this formula becomes especially concrete (see Lemma \ref{lem:dc-clarke-slopes}): if $S_e(x)$ is the set of gradients of affine pieces attained on full-dimensional cells whose closures contain $x$, then
\[
    \Csub f(x)=\conv S_e(x).
\]
This is the precise sense in which Clarke stationarity checks whether the gradients of neighboring full-dimensional regions can be convexly balanced to zero.

If $f$ is differentiable at $x$, then both notions reduce to the usual gradient: $\Fsub f(x)=\Csub f(x)=\{\nabla f(x)\}$.  The distinction only appears at nonsmooth points, where the exact one-point directional model and the convexified nearby-gradient model can behave differently.

\begin{figure}[htbp]
\centering
\resizebox{0.98\linewidth}{!}{%
\begin{tikzpicture}[
    x=1cm,y=1cm,>=Stealth,
    panel/.style={rounded corners=2pt,fill=gray!3,draw=gray!25,line width=0.45pt},
    axis/.style={->,line width=0.38pt},
    func/.style={line width=1.25pt},
    support/.style={densely dashed,line width=0.58pt},
    note/.style={font=\scriptsize,align=center,fill=white,inner sep=2pt,rounded corners=1pt},
    every node/.style={font=\small}
]
\begin{scope}[shift={(0,0)}]
  \draw[panel] (-2.55,-2.30) rectangle (2.85,2.45);
  \node[font=\bfseries] at (0,2.12) {$f_+(t)=|t|$};
  \node[font=\scriptsize,align=center,text=gray!70!black] at (0,1.78) {local minimum; lower supports exist};

  \draw[axis] (-2.05,0) -- (2.12,0) node[right] {$t$};
  \draw[axis] (0,-0.35) -- (0,1.46);
  \draw[func,blue!70!black] (-1.50,1.50) -- (0,0) -- (1.50,1.50);
  \fill (0,0) circle (1.45pt);

  \draw[support,green!45!black] (-1.45,-0.82) -- (1.45,0.82);
  \draw[support,green!45!black] (-1.62,0) -- (1.62,0);
  \draw[support,green!45!black] (-1.45,0.82) -- (1.45,-0.82);
  \node[note,text=green!35!black,draw=green!35!black,anchor=west] (lb) at (1.43,1.14)
    {local linear\\lower supports};
  \draw[-{Stealth[length=1.8mm]},green!35!black] (lb.south west) to[bend right=14] (0.86,0.48);

  \node[font=\scriptsize,align=center] at (0,-1.63)
    {$\Fsub f_+(0)=[-1,1]$\\[-1pt]$\Csub f_+(0)=[-1,1]$};
\end{scope}

\begin{scope}[shift={(6.35,0)}]
  \draw[panel] (-2.55,-2.30) rectangle (2.85,2.45);
  \node[font=\bfseries] at (0,2.12) {$f_-(t)=-|t|$};
  \node[font=\scriptsize,align=center,text=gray!70!black] at (0,1.78) {sharp maximum; no lower support};

  \draw[axis] (-2.05,0) -- (2.12,0) node[right] {$t$};
  \draw[axis] (0,-1.55) -- (0,0.55);
  \draw[func,red!70!black] (-1.50,-1.50) -- (0,0) -- (1.50,-1.50);
  \fill (0,0) circle (1.45pt);

  \draw[support,gray!70] (-1.65,0) -- (1.65,0);
  \node[note,text=gray!70!black,draw=gray!55,anchor=west] (fail) at (0.88,0.75)
    {slope-$0$ line\\is not a lower support};
  \draw[-{Stealth[length=1.8mm]},gray!70!black] (fail.west) to[bend right=12] (0.50,0.02);

  \node[font=\scriptsize,text=red!65!black,anchor=east] at (-1.25,-1.66) {slope $1$};
  \draw[-{Stealth[length=1.8mm]},red!65!black] (-0.96,-0.96) -- (-1.40,-1.40);
  \node[font=\scriptsize,text=red!65!black,anchor=west] at (1.25,-1.66) {slope $-1$};
  \draw[-{Stealth[length=1.8mm]},red!65!black] (0.96,-0.96) -- (1.40,-1.40);

  \node[font=\scriptsize,align=center] at (0,-1.98)
    {$\Fsub f_-(0)=\emptyset$\\[-1pt]$\Csub f_-(0)=\conv\{-1,1\}$};
\end{scope}
\end{tikzpicture}%
}
\caption{Fr\'echet stationarity is controlled by first-order local lower supports of the exact local model, whereas Clarke stationarity uses the convex hull of gradients from nearby full-dimensional differentiability regions.  The functions $|t|$ and $-|t|$ have the same neighboring slopes at the origin, but only $|t|$ is Fr\'echet stationary.}
\label{fig:frechet-clarke-guide}
\end{figure}

\paragraph{A one-dimensional example.}
At the origin, $f_+(t)=|t|$ has $f_+'(0;u)=|u|$, so
\[
    \Fsub f_+(0)=\{g\in\R: gu\le |u|\ \forall u\}= [-1,1],
    \qquad
    \Csub f_+(0)=[-1,1].
\]
Both stationarity notions hold, and the origin is a local minimizer.  By contrast, $f_-(t)=-|t|$ has $f_-'(0;u)=-|u|$.  The inequalities $gu\le-|u|$ would require $g\le -1$ from $u>0$ and $g\ge1$ from $u<0$, so $\Fsub f_-(0)=\emptyset$.  Nevertheless the two neighboring full-dimensional gradients are still $-1$ and $1$, hence $\Csub f_-(0)=\conv\{-1,1\}=[-1,1]$ and $0\in\Csub f_-(0)$.  This example is the simplest reason why Clarke stationarity may hold at a sharp maximum, while exact Fr\'echet stationarity for continuous PA functions is equivalent to local minimality.

\paragraph{A nonempty strict-inclusion example.}
The inclusion $\Fsub f(x)\subseteq\Csub f(x)$ can be strict even when the Fr\'echet subdifferential is nonempty.  Consider the positively homogeneous PA function
\[
    f(x_1,x_2):=\max\{0,\min\{x_1,x_2\}\}.
\]
At the origin, the full-dimensional affine pieces have gradients $0$, $e_1$, and $e_2$, where $e_1,e_2$ are the standard basis vectors.  Hence
\[
    \Csub f(0)=\conv\{0,e_1,e_2\}.
\]
For the Fr\'echet subdifferential, the directional inequality $\langle v,u\rangle\le f(u)$ for all $u$ applies because $f$ is positively homogeneous.  Since $f(1,0)=f(-1,0)=f(0,1)=f(0,-1)=0$, any $v=(v_1,v_2)\in\Fsub f(0)$ must satisfy both $v_i\le0$ and $v_i\ge0$ for $i=1,2$, so $v=0$.  Conversely $0\in\Fsub f(0)$ because $f\ge0$.  Therefore
\[
    \Fsub f(0)=\{0\}\subsetneq \conv\{0,e_1,e_2\}=\Csub f(0).
\]
This example separates the two notions without relying on an empty Fr\'echet subdifferential.

\subsection{Deferred proofs for local PA calculus}\label{app:prelim-proofs}

\begin{proof}[Proof of Lemma~\ref{lem:pa-local-model}]
Take a finite polyhedral partition on which \(f\) is affine.  After translating the
base point to the origin, each cell whose closure contains \(w\) is locally described by
the inequalities of that cell that are tight at \(w\); inequalities not tight at \(w\)
remain inactive in a sufficiently small ball.  Hence, for some \(r>0\), the cells incident
to \(w\) induce a finite polyhedral fan in the displacement variable \(u\) on
\(\{u:\|u\|\le r\}\).

On each cone of this fan, \(f(w+u)-f(w)\) is a linear function of \(u\).  Because \(f\)
is continuous, these linear pieces agree on common faces, and therefore define a
continuous positively homogeneous PA function \(\phi_w\) satisfying
\[
    f(w+u)=f(w)+\phi_w(u)\qquad(\|u\|\le r).
\]
Consequently,
\[
    f'(w;u)=\lim_{t\downarrow0}\frac{f(w+tu)-f(w)}{t}
           =\lim_{t\downarrow0}\frac{\phi_w(tu)}{t}
           =\phi_w(u).
\]
The exact local identity also identifies the Fr\'echet and Clarke subdifferentials of
\(f\) at \(w\) with those of \(\phi_w\) at the origin, since both definitions depend only
on the function in an arbitrarily small neighborhood of the base point.

Finally, \(0\in\Fsub f(w)\) is equivalent to \(f'(w;u)=\phi_w(u)\ge0\) for every
direction \(u\).  By the exact local identity, this is equivalent to
\(f(w+u)\ge f(w)\) for all sufficiently small \(u\), namely to \(w\) being a local
minimizer of \(f\).
\end{proof}

\begin{proof}[Proof of Lemma~\ref{lem:frechet-translation}]
For convex polyhedral functions, directional derivatives are support functions of their subdifferentials: $h'(w;u)=\sigma_X(u):=\max_{x\in X}\langle x,u\rangle$ and $g'(w;u)=\sigma_Y(u):=\max_{y\in Y}\langle y,u\rangle$.
Thus $(h-g)'(w;u)=\sigma_X(u)-\sigma_Y(u)$, and
\[
 s\in\Fsub(h-g)(w)
 \iff
 \langle s,u\rangle+\sigma_Y(u)\le\sigma_X(u)\quad\forall u\in\R^d.
\]
Because $\sigma_{s+Y}(u)=\langle s,u\rangle+\sigma_Y(u)$, this is support-function domination $\sigma_{s+Y}\le\sigma_X$.
For compact convex sets, support-function domination is equivalent to containment, so $s+Y\subseteq X$.
The approximate statement is obtained by requiring some $s\in\Fsub(h-g)(w)$ with $\|s\|\le\eps$, equivalently $s\in\eps\Ball$.
Setting $\eps=0$ gives the exact stationarity condition $Y\subseteq X$.
\end{proof}

\begin{proof}[Proof of Lemma~\ref{lem:dc-clarke-slopes}]
Let \(S_e(X,Y)\) be the set in the statement.  For a vertex \(v\) of a compact polytope
\(P\), the relative interior of its normal cone is
\[
 \ri N_P(v)
 =
 \{u:\langle v,u\rangle>\langle v',u\rangle\
     \text{for every }v'\in\vertices(P)\setminus\{v\}\}.
\]
Thus \(x\in\vertices(X)\) and \(y\in\vertices(Y)\) are simultaneously exposed exactly
when there is a direction \(u\) satisfying
\[
 \langle x-x',u\rangle>0\quad(x'\in\vertices(X)\setminus\{x\}),
 \qquad
 \langle y-y',u\rangle>0\quad(y'\in\vertices(Y)\setminus\{y\}).
\]
On such a cone, both support functions are differentiable and
\[
    \phi(u)=\sigma_X(u)-\sigma_Y(u)=\langle x-y,u\rangle,
\]
so \(x-y\) is a slope attained on a neighboring full-dimensional differentiability
region of \(\phi\).

Conversely, consider any full-dimensional cone on which \(\phi\) is affine.  Refining
this cone by the common refinement of the normal fans of \(X\) and \(Y\), we obtain a
full-dimensional subcone on which both \(\sigma_X\) and \(\sigma_Y\) are affine.  An
interior direction of this subcone uniquely exposes some vertices \(x\in\vertices(X)\)
and \(y\in\vertices(Y)\), and the slope of \(\phi\) on the subcone is \(x-y\).  Hence
\(S_e(X,Y)\) is exactly the set of slopes attained on neighboring full-dimensional
differentiability regions of the PA function \(\phi\).  The Clarke subdifferential of
a PA function is the convex hull of these essentially active slopes, so
\[
    \Csub\phi(0)=\conv(S_e(X,Y)).
\]
\end{proof}

\subsection{A polynomial baseline for single-max DC input}\label{app:single-max-dc}

\begin{lemma}[Polynomial tests for single-max DC input]\label{lem:single-max-dc-p}
Let
\[
 h(w)=\max_{i\in I}\{\langle a_i,w\rangle+\alpha_i\},
 \qquad
 g(w)=\max_{j\in J}\{\langle b_j,w\rangle+\beta_j\},
\]
with rational data, and let $w\in\Q^d$ and $\eps\ge0$ be given.
Then both tests $0\in \Fsub(h-g)(w)+\eps\Ball$ and $0\in \Csub(h-g)(w)+\eps\Ball$ can be decided in polynomial time.
\end{lemma}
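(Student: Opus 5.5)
Since $X=\Csub h(w)$ and $Y=\Csub g(w)$ are both given by explicit vertex lists, the plan is to reduce both tests to polynomial-size convex quadratic programs. First compute the active index sets $I(w)=\argmax_{i\in I}\{\langle a_i,w\rangle+\alpha_i\}$ and $J(w)=\argmax_{j\in J}\{\langle b_j,w\rangle+\beta_j\}$ by exact rational arithmetic. By the standard formula for the subdifferential of a finite max of affine functions,
\[
 X=\conv\{a_i:i\in I(w)\},\qquad Y=\conv\{b_j:j\in J(w)\}.
\]
These V-representations have size at most the input size, so no enumeration is needed, unlike the compact models of Theorem~\ref{thm:fixed-d-xp}.

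\textbf{Fr\'echet test.} By Lemma~\ref{lem:frechet-translation}, we must decide whether some $s$ with $\|s\|\le\eps$ satisfies $s+Y\subseteq X$. Since $X$ is convex, this holds iff $s+b_j\in X$ for every $j\in J(w)$. Introduce variables $s\in\R^d$ and $\lambda_{j,i}\ge0$ with
\[
 s+b_j=\sum_{i\in I(w)}\lambda_{j,i}a_i,\qquad \sum_{i\in I(w)}\lambda_{j,i}=1\qquad(j\in J(w)).
\]
Then minimize $\|s\|_2^2$ over this polyhedron and compare with $\eps^2$ (if the polyhedron is empty, answer NO). The program has $O(d+|I||J|)$ variables and polynomially many constraints.

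\textbf{Clarke test.} By Lemma~\ref{lem:pa-local-model}, $\Csub(h-g)(w)=\Csub\phi(0)$ with $\phi=\sigma_X-\sigma_Y$. By Lemma~\ref{lem:dc-clarke-slopes}, this equals $\conv S_e(X,Y)$. First reduce the generator lists to $\vertices(X)$ and $\vertices(Y)$ using the LP redundancy test described for V-representations. Then, for each of the at most $|I||J|$ pairs $(x,y)$, decide simultaneous exposure: check whether some $u$ satisfies $\langle x-x',u\rangle>0$ for all other vertices $x'$ of $X$ and $\langle y-y',u\rangle>0$ for all other vertices $y'$ of $Y$. By homogeneity this strict system is equivalent to the same system with right-hand sides $\ge1$, which is an LP feasibility problem. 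Collect the surviving differences into $S_e$. Finally, minimize $\|\sum_{a\in S_e}\mu_a a\|_2^2$ over the simplex and compare with $\eps^2$.

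\textbf{Main obstacle.} The delicate point is not geometric but computational: the QPs must be solved exactly, and the comparison with $\eps^2$ (with $\eps$ possibly irrational-looking) must be made in polynomial time. Convex quadratic programming with rational data is solvable in polynomial time, via the ellipsoid method or Kozlov--Tarasov--Khachiyan, and the optimum is a rational number of polynomial bit size. So comparing the optimum with the rational number $\eps^2$ is exact. Since $\eps$ is rational input, $\eps^2$ is rational, which settles the comparison. All steps — computing active sets, reducing vertices, the $|I||J|$ exposure LPs, and one QP per test — are polynomial in the encoding size, giving the lemma.
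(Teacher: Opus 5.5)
Your proposal is correct and takes essentially the same route as the paper. You compute the active slope sets, reduce the Fr\'echet test via Lemma~\ref{lem:frechet-translation} to the vertex-wise containment QP $s+b_j\in\conv\{a_i: i\in I(w)\}$, and for Clarke you solve one homogenized LP per pair to identify the simultaneously exposed differences before a single simplex QP. The only difference is that you first prune to $\vertices(X)$ and $\vertices(Y)$; the paper skips this step because a non-extreme active slope automatically fails the strict exposure system, so both versions produce the same set $S_e$.
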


\begin{proof}
Let
\[
 A:=\{a_i:\ i\in I,\ \langle a_i,w\rangle+\alpha_i=h(w)\},
 \qquad
 B:=\{b_j:\ j\in J,\ \langle b_j,w\rangle+\beta_j=g(w)\}
\]
be the active slope sets at $w$, after removing duplicates.
Then $\Csub h(w)=\conv(A)$ and $\Csub g(w)=\conv(B)$.
For the Fr\'echet test, Lemma~\ref{lem:frechet-translation} gives $0\in\Fsub(h-g)(w)+\eps\Ball$ if and only if there exists $s$ with $\|s\|\le\eps$ and $s+\conv(B)\subseteq\conv(A)$.
Since $\conv(B)$ is the convex hull of the explicitly listed points in $B$, the containment condition is equivalent to $s+b\in\conv(A)$ for every $b\in B$.
Thus the test is the feasibility of the polynomial-size system
\[
 s+b=\sum_{a\in A}\lambda_{b,a}a,
 \qquad
 \sum_{a\in A}\lambda_{b,a}=1,
 \qquad
 \lambda_{b,a}\ge0\quad (b\in B),
 \qquad
 \|s\|\le\eps,
\]
which is a rational convex QP; for $\eps=0$ it reduces to a linear feasibility problem.

For the Clarke test, the local directional model is $\phi(u)=\max_{a\in A}\langle a,u\rangle-\max_{b\in B}\langle b,u\rangle$.
A pair $(a,b)\in A\times B$ contributes the slope $a-b$ on a full-dimensional linearity cone precisely when the strict system
\[
 \langle a-a',u\rangle>0\quad(a'\in A\setminus\{a\}),
 \qquad
 \langle b-b',u\rangle>0\quad(b'\in B\setminus\{b\})
\]
is feasible.
By homogeneity, this can be checked by the linear feasibility system with all right-hand sides replaced by $1$.
Solving this LP for every pair $(a,b)$ gives the essentially active slope set $S_e$ in polynomial time, and $\Csub(h-g)(w)=\conv(S_e)$.
Finally, $0\in \Csub(h-g)(w)+\eps\Ball$ if and only if the minimum of $\|\sum_{s\in S_e}\lambda_s s\|$ over $\lambda\ge0$ and $\sum_{s\in S_e}\lambda_s=1$ is at most $\eps$, which is a convex quadratic program of polynomial size.
\end{proof}

\section{Deferred proofs for Theorem~\ref{thm:fixed-d-xp}}\label{app:fixed-d-proofs}

We first record two technical facts used in the proof of Theorem~\ref{thm:fixed-d-xp}.
The first one makes explicit the fixed-depth output-size induction for reduced vertex lists of
subdifferential polytopes of $n$-$\MC$ formulas.  The second one isolates the exact comparison
step for the rational quadratic programs that arise from Euclidean projection.

We use only standard fixed-dimensional polyhedral primitives.  An arrangement of $N$ hyperplanes in $\R^d$ has $N^{O(d)}$ cells, and its cell complex can be constructed in $N^{O(d)}$ time for fixed $d$~\citep{Zaslavsky1975,EdelsbrunnerORourkeSeidel1986}.  Candidate vertices of a convex hull can be pruned by linear programming.  For Minkowski sums, we enumerate candidate sums by refining the normal fans of the summands, equivalently by sweeping the arrangement of normal-cone boundaries.  For background on normal cones, exposed faces, and polyhedral computation, see \citet{Ziegler1995,KaibelPfetsch2003,Fukuda2020}.
\begin{lemma}[Vertex enumeration for $n$-$\MC$ subdifferentials]\label{lem:mc-v-growth}
Fix an ambient dimension $d$ and an integer depth bound $D$.
Let $F:\R^d\to\R$ be a convex PA expression, represented by a rational expression tree of depth
at most $D$, whose leaves are affine functions and whose internal nodes are finite sum nodes and
finite max nodes.  Let $\size$ denote the total binary encoding length of the expression and of
the query point $w$.
For every node $\nu$, let $F_\nu$ be the subexpression rooted at $\nu$ and put
\[
    P_\nu:=\Csub F_\nu(w).
\]
Then there is a computable function $\Gamma_D(d)$, depending only on $d$ and the fixed depth
bound $D$, such that for every node $\nu$ the vertex set $\vertices(P_\nu)$ has cardinality at most
$\size^{\Gamma_D(d)}$, all listed vertices have rational bit length at most
$\size^{\Gamma_D(d)}$, and the vertex sets $\vertices(P_\nu)$ for all nodes can be computed in
deterministic time $\size^{\Gamma_D(d)}$.
\end{lemma}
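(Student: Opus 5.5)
We argue by induction on the height of the node $\nu$ in the expression tree, computing $\vertices(P_\nu)$ bottom-up via the standard subdifferential calculus for convex PA functions.
\begin{itemize}
\item For an affine leaf $\langle x,\cdot\rangle+a$ we have $P_\nu=\{x\}$.
\item For a sum node with children $\mu_1,\dots,\mu_J$ we have $P_\nu=P_{\mu_1}+\cdots+P_{\mu_J}$, by the sum rule for finite convex functions.
\item For a max node we have $P_\nu=\conv\bigcup_{\mu\text{ active}}P_\mu$, where $\mu$ is active when $F_\mu(w)=F_\nu(w)$.
\end{itemize}
Activity is decided exactly, by evaluating every subexpression at the rational point $w$ bottom-up. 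These values have bit length polynomial in $\size$ for fixed depth, because the tree has size at most $\size$ and each evaluation only adds or takes maxima of rationals. Thus each step reduces to two primitives on V-representations: reduced convex hull of a union, and vertex list of a Minkowski sum.

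\textbf{Max nodes.} Let $V$ be the union of the children's vertex lists. Prune $V$ by one LP per candidate, testing whether $v\in\conv(V\setminus\{v\})$. The output has cardinality at most $|V|\le\sum_\mu|\vertices(P_\mu)|$, and coordinates are inherited unchanged, so the bit length does not grow. The time is polynomial in $|V|$ and in the bit lengths.

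\textbf{Sum nodes.} This is the step that needs the fixed dimension. Every vertex of $P_{\mu_1}+\dots+P_{\mu_J}$ has the form $\sum_t v_t$, where the $v_t$ are uniquely exposed by a common direction $u$ lying in the interior of a maximal cone of the common refinement of the normal fans. Take the hyperplanes $\langle v-v',u\rangle=0$ for $v,v'\in\vertices(P_{\mu_t})$ and all $t$; there are at most $H:=\sum_t m_t^2$ of them, with $m_t=|\vertices(P_{\mu_t})|$. Each full-dimensional cell of this arrangement determines exactly one vertex of the sum, so the number of vertices is at most $O(H)^{d}$, and the cells can be enumerated in $H^{O(d)}$ time~\citep{Zaslavsky1975,EdelsbrunnerORourkeSeidel1986}. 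For each cell, pick a rational interior direction and take the maximizer in every summand. The alternative of summing all candidate tuples would give $\prod_t m_t$, which is exponential in the fan-in $J$ and must be avoided. The bit length of a sum grows only additively: at most $\log J$ plus the maximum child bit length, times a polynomial factor for common denominators.

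\textbf{Recurrence and the bound.} If every child has at most $B$ vertices, a sum node has at most $(\size B^2)^{O(d)}$ vertices and a max node at most $\size B$. Starting from $B=1$ at the leaves and iterating over at most $D$ levels gives
\[
|\vertices(P_\nu)|\le\size^{c^{D}d^{D}}
\]
for an absolute constant $c$. So we may set $\Gamma_D(d):=(c\,d)^{D}+c$, which is computable and depends only on $d$ and $D$. Bit lengths grow at most by a polynomial factor per level, which stays within $\size^{\Gamma_D(d)}$.

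\textbf{Running time.} The total time is the number of nodes, which is at most $\size$, times the per-node cost $\size^{O(\Gamma_D(d))}$; adjusting the constant in $\Gamma_D$ absorbs this. The main point needing care is the Minkowski-sum vertex bound, specifically showing that the arrangement cells correspond to vertices and that rational interior directions of small bit length exist. The latter follows because the cell vertices, and hence interior points, are solutions of rational linear systems whose data are the vertex differences.
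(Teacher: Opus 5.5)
Your proposal is correct and follows essentially the same route as the paper. Both arguments run a bottom-up induction on node height, evaluate $F_\nu(w)$ exactly to identify active children, handle max nodes by LP-pruning the union of the active children's vertex lists, and handle sum nodes by enumerating the full-dimensional cells of the arrangement $\{\langle v-v',u\rangle=0\}$ of within-summand vertex differences and summing the unique maximizers. Your differences are minor refinements the paper leaves implicit: at sum nodes only deduplication (not LP pruning) is needed, since each generic cell already exposes a vertex of the sum, and you make the exponent explicit as roughly $(cd)^D$.
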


\begin{proof}
Let the expression be represented by a rooted tree.  For a node \(\nu\), write
\(\operatorname{child}(\nu)\) for the set of its children, and define the height of \(\nu\)
as the maximum length of a downward path from \(\nu\) to a leaf.  Leaves have height zero.
We prove the claim by induction on node height.  We slightly strengthen the induction statement:
for every node of height at most \(t\), we compute both the exact value \(F_\nu(w)\) and the
vertex list \(\vertices(P_\nu)\), where \(P_\nu=\Csub F_\nu(w)\), with cardinalities and bit lengths
bounded by \(\size^{\Gamma_t(d)}\).

We first record the fixed-dimensional vertex-enumeration fact used in the induction.  Let
\(P_1,\dots,P_R\subseteq\R^d\) be rational polytopes given by reduced vertex lists, and let \(N\)
be the total number of input vertices.  For each fixed \(d\), there is a constant \(c_d\) such that
the vertices of
\[
    \conv\Bigl(\bigcup_{r=1}^R P_r\Bigr)
    \qquad\text{and}\qquad
    \sum_{r=1}^R P_r
\]
can be computed in time \(N^{c_d}\) times a polynomial in the input bit length.

For the convex hull of a union, every extreme point of \(\conv(\cup_r P_r)\) is already a vertex of at least one input polytope.  Thus the union of the input vertex lists is a complete candidate list.  To reduce it, test each candidate \(v\) for membership in the convex hull of the other candidates; this is a rational linear feasibility problem, and exactly the candidates that fail the test are the vertices of the convex hull.

For the Minkowski sum \(P:=P_1+\cdots+P_R\), we use exposed faces.  For a polytope \(P\) and a direction \(u\), define
\[
    F_P(u):=\argmax_{x\in P}\langle x,u\rangle .
\]
This is the face on which the linear objective \(x\mapsto\langle x,u\rangle\) is maximized.  The directions exposing the same face form a normal cone, and these normal cones partition direction space.  We only need the following elementary consequence: for every direction \(u\),
\[
    F_{P_1+\cdots+P_R}(u)=F_{P_1}(u)+\cdots+F_{P_R}(u),
\]
because maximizing a linear function over a Minkowski sum separates over the summands.  Hence, if \(F_{P_r}(u)=\{v_r\}\) for every \(r\), then \(F_P(u)=\{v_1+\cdots+v_R\}\), so \(v_1+\cdots+v_R\) is a vertex of \(P\).

To find all such generic directions, form the hyperplanes
\[
    \langle v-v',u\rangle=0
\]
for all pairs of distinct vertices \(v,v'\) from the same summand \(P_r\).  These hyperplanes divide the direction space into open polyhedral cones.  On any one of these cones, the ordering of the values \(\langle v,u\rangle\) is fixed inside each summand, so each \(P_r\) has the same unique maximizing vertex throughout the cone.  The arrangement has \(O(N^2)\) hyperplanes and therefore \(N^{O(d)}\) full-dimensional cones in fixed dimension; by the fixed-dimensional arrangement construction, these cones can be enumerated in \(N^{O(d)}\) time~\citep{Zaslavsky1975,EdelsbrunnerORourkeSeidel1986}.

For each cone, choose a rational representative direction \(u\), compute the unique maximizers \(v_r(u)\in\vertices(P_r)\), and add \(v_1(u)+\cdots+v_R(u)\) to the candidate list.  Remove duplicate candidates.  Then test each remaining candidate \(q\) for membership in the convex hull of the other candidates; this is a rational linear feasibility problem, and the candidates that fail this test are exactly the vertices.  The candidate list is complete: if \(q\) is a vertex of \(P\), choose \(u\) in the interior of its normal cone, so \(F_P(u)=\{q\}\).  Since finitely many hyperplanes have empty interior, we may choose such a \(u\) outside all hyperplanes above.  Then each summand has a unique maximizer \(v_r(u)\), and
\[
    q=v_1(u)+\cdots+v_R(u),
\]
so \(q\) appears in the list.  Since the candidates are sums of input vertices and the filtering steps are rational LP feasibility tests, the running time is \(N^{O(d)}\) times a polynomial in the input bit length, and all intermediate bit lengths stay polynomially bounded.

For a leaf \(F_\nu(x)=\langle a,x\rangle+b\), we have \(P_\nu=\{a\}\), and both \(F_\nu(w)\) and
\(\vertices(P_\nu)\) are computed directly.  Thus the claim holds for height zero.

Now assume the claim holds for all nodes of height at most \(t-1\), and let \(\nu\) be a node of
height \(t\).  Since the expression is explicitly listed, every node has at most \(\size\) children,
and the total number of nodes is at most \(\size\).

If \(\nu\) is a sum node, say
\[
    F_\nu=\sum_{\mu\in\operatorname{child}(\nu)}F_\mu,
\]
then \(F_\nu(w)=\sum_{\mu\in\operatorname{child}(\nu)}F_\mu(w)\), and convex subdifferential
calculus gives
\[
    P_\nu=\sum_{\mu\in\operatorname{child}(\nu)}P_\mu .
\]
By the induction hypothesis, the total number of child vertices is at most
\(\size\cdot\size^{\Gamma_{t-1}(d)}\).  Applying the fixed-dimensional Minkowski-sum enumeration
above gives
\[
    |\vertices(P_\nu)|
    \le
    \bigl(\size^{\Gamma_{t-1}(d)+1}\bigr)^{c_d}
    \le
    \size^{\Gamma_t(d)}
\]
after choosing \(\Gamma_t(d)\) large enough.  The bit length of \(F_\nu(w)\) and of the listed
vertices satisfies the same type of bound, after increasing \(\Gamma_t(d)\) if necessary.

If \(\nu\) is a max node, say
\[
    F_\nu=\max_{\mu\in\operatorname{child}(\nu)}F_\mu,
\]
we first compute all child values \(F_\mu(w)\) exactly and determine the active set
\[
    \operatorname{Act}(\nu,w)
    :=
    \{\mu\in\operatorname{child}(\nu):F_\mu(w)=F_\nu(w)\}.
\]
The exact comparisons have bit length bounded by the induction hypothesis.  Convex
subdifferential calculus for a pointwise maximum gives
\[
    P_\nu
    =
    \conv\Bigl(\bigcup_{\mu\in\operatorname{Act}(\nu,w)}P_\mu\Bigr).
\]
The candidate vertices are contained in the union of the active child vertex lists, whose total
size is at most \(\size^{\Gamma_{t-1}(d)+1}\).  Removing non-extreme candidates by rational linear
feasibility tests gives \(\vertices(P_\nu)\) within the same bound, again after increasing
\(\Gamma_t(d)\) if necessary.

Thus, for every height \(t\), there is a computable exponent \(\Gamma_t(d)\) controlling the
vertex-list size, bit length, and computation time for all nodes of height at most \(t\).  Since the
depth bound \(D\) is fixed independently of the input, taking \(t=D\) gives a computable exponent
\(\Gamma_D(d)\).  The total work over all nodes is absorbed into \(\size^{\Gamma_D(d)}\).
\end{proof}

\begin{remark}
For explicit \(n\)-\(\MC\) DC input, the lifted LP descriptions of
\citet[Appendix~A]{TianSo2025} give polynomial-size
projected representations of the subdifferentials.  This is used to obtain polynomial-time subroutines
for subgradient membership, the sum-rule relaxation condition
\(0\in\partial h(w)-\partial g(w)+\eps B\), and transversality
\citep[Proposition~A.1(a)--(c)]{TianSo2025}.  These subroutines also support the (co)NP membership proofs for exact stationarity testing
\citep[Theorem~3.2]{TianSo2025}

Lemma~\ref{lem:mc-v-growth}, by contrast, uses a different tool tailored to our
fixed-dimensional parameterized setting.  Under a fixed depth bound \(D\), the
same subdifferential polytopes are converted into explicit
reduced vertex lists by an efficient vertex-enumeration algorithm
routine~\citep{EdelsbrunnerORourkeSeidel1986,Ziegler1995}, within
\(\size^{\Gamma_D(d)}\) time.  This use of vertex enumeration is the new
algorithmic ingredient here.
\end{remark}

We also use the classical polynomial-time solvability of rational convex quadratic
programming in the Turing model~\citep{KozlovTarasovKhachiyan1980}; see also
\citet{GrotchelLovaszSchrijver1988} for the rational-data ellipsoid framework.
We record only the projection forms needed below.

\begin{lemma}[Exact polynomial-time comparison for rational quadratic subproblems]\label{lem:exact-rational-qp}
Let \(Q\succeq0\), \(q\), \(r\), \(A\), \(b\), and \(\tau\) be rational input data.
The threshold decision problem
\[
    \inf\{z^\top Qz+2q^\top z+r:\ Az\le b\}\le \tau
\]
can be decided in polynomial time in the binary encoding length of the displayed rational
data.  The same algorithmic framework detects infeasibility and unboundedness below.

In particular, the following two projection comparisons are exact polynomial-time decisions
in their encoded size:
\[
    \min_{z\in P}\|Lz+c\|_2^2\le \tau
    \quad\text{for a rational polyhedron }P,
\]
and
\[
    \min\left\{
        \left\|\sum_{a\in S}\lambda_a a\right\|_2^2:
        \lambda_a\ge0,\ 
        \sum_{a\in S}\lambda_a=1
    \right\}\le \tau
    \quad\text{for a finite nonempty rational set }S\subseteq\R^d .
\]
Here all matrices, vectors, points, and the threshold \(\tau\) are rationally encoded.
\end{lemma}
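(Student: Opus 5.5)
The plan is to reduce both comparisons to the classical result of \citet{KozlovTarasovKhachiyan1980} on exact polynomial-time solvability of rational convex quadratic programming, and then to check that each of the two displayed projection forms is a special case of the general threshold problem with polynomially bounded encoding.

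\textbf{General threshold problem.} The objective $z^\top Qz+2q^\top z+r$ is convex because $Q\succeq0$, and the constraints $Az\le b$ are rational. First I would decide feasibility of $Az\le b$ by rational linear programming. Next I would decide boundedness below. A convex quadratic is unbounded below on a polyhedron exactly when there is a recession direction $y$ with $Ay\le0$, $Qy=0$ and $q^\top y<0$. This is the Frank--Wolfe type characterization, and it is checkable by one LP. If the problem is feasible and bounded, the Frank--Wolfe theorem says the infimum is attained. The Kozlov--Tarasov--Khachiyan algorithm then computes an exact rational optimal solution $z^\star$ whose bit length is polynomial in the input. Finally I would evaluate the objective at $z^\star$ in exact rational arithmetic and compare it with $\tau$; this is a polynomial-size rational computation. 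The conventions are that infeasibility gives the value $+\infty$ (answer NO) and unboundedness gives $-\infty$ (answer YES).

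\textbf{The two projection forms.} For $\min_{z\in P}\|Lz+c\|_2^2$, expanding gives $Q=L^\top L\succeq0$, $q=L^\top c$ and $r=c^\top c$. All three have encoding polynomial in that of $L$ and $c$, and $P=\{z:Az\le b\}$ is given by its rational description. For the simplex form, set $z=\lambda\in\R^S$ and $L=[a]_{a\in S}$, the matrix whose columns are the points of $S$, with $c=0$. The equality $\sum_a\lambda_a=1$ is written as two inequalities and $\lambda\ge0$ is added. The feasible set is then nonempty and compact, so the minimum is attained. Both forms therefore fall under the general statement.

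\textbf{Main obstacle.} The main subtlety is exactness. The threshold $\tau$ is typically $\eps^2$ for rational $\eps$, and the comparison with it must be decided exactly, not approximately. I would handle this by relying on the fact that the optimum of a rational convex QP is attained at a rational point of polynomial size. This holds because the KKT system restricted to the optimal face is a rational linear system. Once that point is computed, the comparison with $\tau$ is an exact rational check.
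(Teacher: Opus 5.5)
Your proposal is correct and follows the paper's proof. Both arguments invoke the Kozlov--Tarasov--Khachiyan theorem for the general rational convex QP threshold, then treat the two projection forms as special cases via $Q=L^\top L$, $q=L^\top c$, $r=\|c\|_2^2$, and the Gram matrix on the simplex. Your explicit LP test for unboundedness ($Ay\le 0$, $Qy=0$, $q^\top y\le -1$), the Frank--Wolfe attainment step, and the polynomial-size rational optimum argument only spell out details that the paper leaves to the citation.
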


\begin{proof}
The first displayed problem is a convex quadratic program with rational data.  By the
polynomial-time solvability theorem for rational convex quadratic programming
of \citet{KozlovTarasovKhachiyan1980}, one can decide the rational threshold comparison
in the Turing model in time polynomial in the binary input length; infeasibility and
unboundedness below are detected within the same bound.

The two projection problems are special cases.  In the first one,
\[
    \|Lz+c\|_2^2
    =
    z^\top L^\top Lz+2(L^\top c)^\top z+\|c\|_2^2,
\]
so the objective has a rational positive semidefinite quadratic matrix \(L^\top L\), and
the constraints defining \(P\) are rational linear inequalities.  In the second one, the
variables are the simplex coefficients \(\lambda\), the feasible region is the rational
simplex, and the objective matrix is the rational Gram matrix
\[
    \bigl(\langle a,a'\rangle\bigr)_{a,a'\in S}.
\]
Thus both are rational convex quadratic programs, and their comparisons with the rational
threshold \(\tau\) are exact polynomial-time decisions in the encoded input size.
\end{proof}

\begin{proof}[Proof of Theorem~\ref{thm:fixed-d-xp}: fixed-depth explicit $n$-$\MC$ DC input, Fr\'echet-$\eps$-YES]
Let the fixed-depth explicit $n$-$\MC$ formulas for $h$ and $g$ be viewed as their canonical
alternating max/sum expression trees.  Apply Lemma~\ref{lem:mc-v-growth} to both trees.
This gives reduced vertex lists for
\[
    X:=\Csub h(w),
    \qquad
    Y:=\Csub g(w)
\]
in time $\size^{\Gamma(d)}$, where $\Gamma$ depends only on the fixed depth and on $d$.

By Lemma~\ref{lem:frechet-translation},
\[
    0\in\Fsub(h-g)(w)+\eps\Ball
    \iff
    \exists \xi\in\R^d \text{ such that } \|\xi\|\le\eps
    \text{ and } \xi+Y\subseteq X.
\]
Write
\[
    \vertices(X)=\{x_1,\dots,x_N\},
    \qquad
    \vertices(Y)=\{y_1,\dots,y_m\}.
\]
Since $Y=\conv\{y_1,\dots,y_m\}$ and $X=\conv\{x_1,\dots,x_N\}$, the containment
$\xi+Y\subseteq X$ is equivalent to requiring $\xi+y_i\in X$ for every $i\in[m]$.
Equivalently, for every $i\in[m]$, there exist coefficients $\lambda_{i,1},\dots,\lambda_{i,N}$
such that
\[
    \xi+y_i=\sum_{r=1}^N \lambda_{i,r}x_r,
    \qquad
    \sum_{r=1}^N\lambda_{i,r}=1,
    \qquad
    \lambda_{i,r}\ge0\quad(r=1,\dots,N).
\]
Thus the Fr\'echet-$\eps$-YES test is the exact comparison
\[
    \min_{\xi,\lambda}
    \left\{
        \|\xi\|_2^2:
        \xi+y_i=\sum_{r=1}^N \lambda_{i,r}x_r,
        \sum_{r=1}^N\lambda_{i,r}=1,
        \lambda_{i,r}\ge0
        \quad(i=1,\dots,m)
    \right\}
    \le \eps^2 .
\]
The number of variables, constraints, and coefficient bit lengths in the displayed rational convex
QP are bounded by $\size^{\Gamma(d)}$ after increasing $\Gamma$ if necessary.  By
Lemma~\ref{lem:exact-rational-qp}, this comparison is decided exactly in time polynomial in the
displayed QP size, hence in time $\size^{\Gamma(d)}$.
For local minimality, set \(\eps=0\) in the same Fr\'echet computation and use Lemma~\ref{lem:pa-local-model}, which identifies \(0\in\Fsub f(w)\) with \(w\) being a local minimizer for continuous PA functions.
\end{proof}

\begin{proof}[Proof of Theorem~\ref{thm:fixed-d-xp}: fixed-depth explicit $n$-$\MC$ DC input, Clarke-$\eps$-NO]
First compute reduced vertex lists for
\[
    X:=\Csub h(w),
    \qquad
    Y:=\Csub g(w)
\]
using Lemma~\ref{lem:mc-v-growth}.  For convex PA functions, the local directional model of
$f=h-g$ at $w$ is
\[
    \phi(u):=\sigma_X(u)-\sigma_Y(u),
\]
and Lemma~\ref{lem:pa-local-model} gives $\Csub f(w)=\Csub\phi(0)$.

Let $V_X:=\vertices(X)$ and $V_Y:=\vertices(Y)$.
For $x\in V_X$ and $y\in V_Y$, the pair $(x,y)$ is simultaneously exposed precisely when the
strict system
\[
    \langle x-x',u\rangle>0\qquad(x'\in V_X\setminus\{x\}),
\]
\[
    \langle y-y',u\rangle>0\qquad(y'\in V_Y\setminus\{y\})
\]
is feasible.  If the system has at least one inequality, homogeneity makes this equivalent to
feasibility of the rational linear system obtained by replacing every strict right-hand side by $1$.
If there are no inequalities, the pair is feasible by convention.  This formulation also covers
lower-dimensional polytopes and singleton polytopes: the strict vertex inequalities characterize
the relative interiors of the corresponding normal cones, and singleton normal cones contribute
no strict inequalities.

Define
\[
    S_e:=
    \{x-y:\ x\in V_X,\ y\in V_Y,
    \ri N_X(x)\cap\ri N_Y(y)\ne\emptyset\}.
\]
The number of vertex pairs and the size of every linear feasibility system are bounded by
$\size^{\Gamma(d)}$.  By Lemma~\ref{lem:dc-clarke-slopes},
\[
    \Csub f(w)=\Csub\phi(0)=\conv(S_e).
\]
Therefore Clarke-$\eps$-NO is decided by the exact comparison
\[
    \min\left\{
        \left\|\sum_{a\in S_e}\lambda_a a\right\|_2^2:
        \lambda_a\ge0,\ \sum_{a\in S_e}\lambda_a=1
    \right\}
    >\eps^2 .
\]
By Lemma~\ref{lem:exact-rational-qp}, this rational convex-QP comparison is exact and polynomial
in the displayed QP size.  The overall running time is $\size^{\Gamma(d)}$.
\end{proof}

\begin{proof}[Proof of Theorem~\ref{thm:fixed-d-xp}: explicit max--min input, Fr\'echet-$\eps$-YES]
Let
\[
    \phi(u)=\max_{i\in I(w)}\min_{j\in J_i(w)}\langle x_j,u\rangle
\]
be the exact local model from~\eqref{eq:local-model}.  By Lemma~\ref{lem:pa-local-model},
$\Fsub f(w)=\Fsub\phi(0)$.
Let $A$ be the set of distinct slopes appearing in this local model; clearly $|A|\le \size$.

Build the central hyperplane arrangement
\[
    \mathcal H
    :=
    \{\langle a-a',u\rangle=0:\ a,a'\in A,\ a\ne a'\}.
\]
If $\mathcal H$ is empty, it has the single full-dimensional cone $\R^d$.
Otherwise, it has $O(|A|^2)$ hyperplanes and hence $\size^{O(d)}$ full-dimensional open cones in
fixed dimension.  These cones can be enumerated in time $\size^{O(d)}$.

On each full-dimensional open cone $C$, the ordering of all quantities
$\langle a,u\rangle$, $a\in A$, is fixed and strict.  Consequently, every inner minimum and the
outer maximum select fixed slopes.  Thus there is a slope $a_C\in A$ such that
\[
    \phi(u)=\langle a_C,u\rangle
    \qquad(u\in C).
\]
Let
\[
    K_C:=\overline C.
\]
From the arrangement description, orient the inequalities so that
\[
    K_C=\{u:B_Cu\ge0\},
\]
where $B_C$ has $\size^{O(1)}$ rows.

By the directional-derivative definition of the Fr\'echet subdifferential and the positive homogeneity of \(\phi\), for a fixed \(\xi\) we have
\[
    \xi\in\Fsub\phi(0)
    \quad\Longleftrightarrow\quad
    \langle \xi,u\rangle\le\phi(u)\quad\text{for every }u\in\R^d .
\]
The closures \(K_C\) of the full-dimensional arrangement cones cover \(\R^d\), so this condition is equivalent to imposing the same inequality on every \(K_C\).  Moreover, \(\phi\) equals \(\langle a_C,u\rangle\) on the open cone \(C\), and by continuity the same equality extends to its closure \(K_C\).  Hence
\[
    \Fsub\phi(0)
    =
    \bigcap_C
    \{\xi:\langle \xi,u\rangle\le \phi(u)\ \forall u\in K_C\}.
\]
On $K_C$, this condition is
\[
    \langle \xi-a_C,u\rangle\le0\qquad\forall u\in K_C,
\]
or equivalently
\[
    \xi-a_C\in K_C^\circ .
\]
Because $K_C=\{u:B_Cu\ge0\}$, its polar cone is
\[
    K_C^\circ=\{-B_C^\top\lambda_C:\lambda_C\ge0\}.
\]
Therefore $\xi\in\Fsub\phi(0)$ if and only if, for every full-dimensional arrangement cone $C$,
there exists $\lambda_C\ge0$ such that
\[
    \xi=a_C-B_C^\top\lambda_C.
\]
Thus $0\in\Fsub f(w)+\eps\Ball$ is equivalent to feasibility of
\[
    \exists \xi,\{\lambda_C\}_C:\qquad
    \xi=a_C-B_C^\top\lambda_C\quad\forall C,
    \qquad
    \lambda_C\ge0\quad\forall C,
    \qquad
    \|\xi\|_2\le\eps .
\]
Equivalently, one minimizes $\|\xi\|_2^2$ over the displayed rational polyhedron in the variables
$\xi$ and $\{\lambda_C\}_C$ and compares the optimum with $\eps^2$.
The number of variables, constraints, and the bit length of all rational coefficients are bounded by
$\size^{O(d)}$.  Lemma~\ref{lem:exact-rational-qp} gives an exact comparison in time polynomial in
this displayed size, and hence in time $\size^{O(d)}$.
For local minimality, set \(\eps=0\) and apply Lemma~\ref{lem:pa-local-model}; the resulting exact Fr\'echet test decides whether \(w\) is a local minimizer.
\end{proof}

\begin{proof}[Proof of Theorem~\ref{thm:fixed-d-xp}: explicit max--min input, Clarke-$\eps$-NO]
Compute the local model $\phi$ as in the preceding proof.  By Lemma~\ref{lem:pa-local-model},
\[
    \Csub f(w)=\Csub\phi(0).
\]
Let $A$ be the set of distinct slopes appearing in $\phi$, and construct the central arrangement
\[
    \mathcal H
    =
    \{\langle a-a',u\rangle=0:\ a,a'\in A,\ a\ne a'\}.
\]
If $A$ is a singleton, the arrangement has the single full-dimensional cone $\R^d$.
Otherwise, $\mathcal H$ has $\size^{O(d)}$ full-dimensional cones, enumerable within the same
time bound.

On every full-dimensional open cone $C$, all slopes in $A$ are strictly ordered.
Thus each inner minimum selects a fixed slope and the outer maximum among these selected slopes
also selects a fixed slope.  Consequently, there is a slope $a_C\in A$ such that
\[
    \phi(u)=\langle a_C,u\rangle
    \qquad(u\in C).
\]
Define
\[
    S_e:=\{a_C:\ C\text{ is a full-dimensional cone of }\mathcal H\},
\]
with duplicates removed.  Every $a_C$ is attained as the gradient of $\phi$ on the open
full-dimensional cone $C$, so it is essentially active.  Conversely, if a slope is essentially active for
$\phi$ at the origin, then it is attained on some full-dimensional differentiability region of the local
PA fan.  The arrangement $\mathcal H$ refines that fan, so one of its full-dimensional cones lies
inside this region and contributes the same slope.  Hence $S_e$ is exactly the essentially active
slope set.

For a Lipschitz PA function, the Clarke subdifferential at a point is the convex hull of the limiting
gradients from neighboring full-dimensional differentiability regions.  Since $\phi$ is positively
homogeneous, all full-dimensional cones of its local fan are adjacent to the origin.  Therefore
\[
    \Csub f(w)=\Csub\phi(0)=\conv(S_e).
\]
The comparison $\dist(0,\Csub f(w))\le\eps$ is exactly
\[
    \min\left\{
        \left\|\sum_{a\in S_e}\lambda_a a\right\|_2^2:
        \lambda_a\ge0,\ \sum_{a\in S_e}\lambda_a=1
    \right\}
    \le \eps^2 .
\]
The set $S_e$ has size $\size^{O(d)}$, and Lemma~\ref{lem:exact-rational-qp} decides the displayed
comparison exactly in time polynomial in its encoded size.  Clarke-$\eps$-NO is obtained by
checking whether the optimum is greater than $\eps^2$.  The total running time is
$\size^{O(d)}$.
\end{proof}

\section{Deferred proofs for Theorem~\ref{thm:w1-hard}}\label{app:hardness-proofs}
This appendix gives the full reduction underlying Theorem~\ref{thm:w1-hard}.  We first build, from a \(k\)-Clique instance, a pair of polytopes \(X\subseteq Y\) such that the translate-containment condition \(s+Y\subseteq X\) distinguishes clique from no-clique instances.  Lemma~\ref{lem:frechet-translation} then turns this containment gadget into the Fr\'echet stationarity test.

Throughout this appendix, let \(G=([N],E)\) be an undirected graph with \(N\ge3\) and let \(k\ge2\).  Put
\[
    p_v:=(v,v^2)\in\Z^2,
    \qquad
    Q:=\conv\{p_v:v\in[N]\}\subseteq\R^2,
\]
and define the forbidden ordered pairs
\[
    \Forb:=\{(u,v)\in[N]^2:u=v\text{ or }\{u,v\}\notin E\}.
\]
Thus a tuple \((v_1,\ldots,v_k)\) is a \(k\)-clique exactly when no pair \((v_i,v_j)\), \(i<j\), is in \(\Forb\).  Let
\[
    \mathcal A_G:=\{(i,j,u,v):1\le i<j\le k,\ (u,v)\in\Forb\}.
\]
For \(z=(z_1,\ldots,z_k)\in(\R^2)^k\), define
\[
    g(z):=\sum_{r=1}^k\sigma_Q(z_r),
    \qquad
    \psi_{i,j,u,v}(z):=\langle p_u,z_i\rangle+\langle p_v,z_j\rangle+
    \sum_{r\in[k]\setminus\{i,j\}}\sigma_Q(z_r),
\]
and \(h(z):=\max_{\alpha\in\mathcal A_G}\psi_\alpha(z)\).  Finally set
\[
    Y:=\Csub g(0)=Q^k,
    \qquad
    X:=\Csub h(0).
\]

\begin{lemma}[Clique polytope gadget]\label{lem:clique-polytope}
With the notation above, $X\subseteq Y=Q^k$.
Moreover, $X=Y$ if and only if $G$ has no $k$-clique.
If $G$ has a $k$-clique, then there is no $s\in\R^{2k}$ such that $s+Y\subseteq X$.
If $G$ has no $k$-clique, then the only $s\in\R^{2k}$ with $s+Y\subseteq X$ is $s=0$.
\end{lemma}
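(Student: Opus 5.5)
We first identify $X$ explicitly. Each $\psi_\alpha$ with $\alpha=(i,j,u,v)$ is a sum of support functions of the factors $\{p_u\}$ in block $i$, $\{p_v\}$ in block $j$, and $Q$ in every other block. Hence $\psi_\alpha=\sigma_{F_\alpha}$, where
\[
F_\alpha:=Q\times\cdots\times\{p_u\}\times\cdots\times\{p_v\}\times\cdots\times Q .
\]
All $\psi_\alpha$ vanish at $0$, so every piece is active at the origin. The max rule for convex subdifferentials then gives $X=\conv\bigl(\bigcup_{\alpha\in\mathcal A_G}F_\alpha\bigr)$. Each $F_\alpha\subseteq Q^k=Y$, so $X\subseteq Y$.

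\textbf{Vertex characterization.} The points $p_v=(v,v^2)$ lie on a strictly convex parabola, so every $p_v$ is a vertex of $Q$. Consequently $\vertices(Y)=\{(p_{v_1},\dots,p_{v_k}):v_r\in[N]\}$. The vertices of $X$ are vertices of some $F_\alpha$, and those are product vertices lying in $\vertices(Y)$. So $\vertices(X)$ is a subset of $\vertices(Y)$. A product vertex $(p_{v_1},\dots,p_{v_k})$ lies in some $F_\alpha$ iff some pair $(v_i,v_j)$ with $i<j$ is in $\Forb$. If it lies in no $F_\alpha$, it cannot lie in $X$ at all: a vertex of $Y$ belongs to the convex hull of a subset $S\subseteq Y$ only if it belongs to $S$, by extremality. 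Therefore $X$ contains exactly the non-clique product vertices of $Y$.

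\textbf{Clique case.} If $G$ has no $k$-clique, every product vertex is in $X$, so $X=\conv\vertices(Y)=Y$. If $G$ has a $k$-clique, its tuple gives a vertex of $Y$ not in $X$, so $X\neq Y$. This proves the equivalence.

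\textbf{Translates.} The main subtlety is excluding translates $s+Y\subseteq X$ with $s\ne0$. We argue by volume. $Q$ is a full-dimensional polygon, since $N\ge3$ and the points are not collinear, so $Y=Q^k$ is full-dimensional in $\R^{2k}$ with $\vol(Y)>0$.

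\begin{itemize}
\item \emph{Clique case.} Suppose $s+Y\subseteq X\subseteq Y$. Then $\vol(X)\ge\vol(Y)$, so $\vol(Y\setminus X)=0$. But $Y\setminus X$ is relatively open in $Y$ and nonempty, since it contains the missing vertex $q$. Because $Y$ is full-dimensional and convex, every neighborhood of $q$ meets $Y$ in a set of positive volume. As $X$ is closed, a small neighborhood of $q$ avoids $X$, so $Y\setminus X$ has positive volume, a contradiction.
\item \emph{No-clique case.} Here $X=Y$, so we need $s+Y\subseteq Y\Rightarrow s=0$. Compare support functions: $\langle s,u\rangle+\sigma_Y(u)\le\sigma_Y(u)$ for all $u$ gives $\langle s,u\rangle\le0$ for all $u$, hence $s=0$. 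Conversely, $s=0$ works trivially.
\end{itemize}

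The support-function comparison avoids volume arguments in the second case. The only step needing care is the strict volume loss in the clique case, handled as above.
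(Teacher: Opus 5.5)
Your proof is correct and follows essentially the same route as the paper: identify $X=\conv(\bigcup_\alpha F_\alpha)$ via the max rule, show that $X$ contains exactly the non-clique product vertices of $Y=Q^k$, rule out translates in the clique case by comparing volumes, and force $s=0$ in the no-clique case with support functions. The differences are only in local justifications, and both of yours are valid: you exclude uncovered vertices by extremality rather than the paper's explicit exposing vectors $(2v,-1)$, and you get $\vol(X)<\vol(Y)$ from a positive-volume neighborhood of the missing vertex rather than from a cut-off cap.
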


\begin{proof}
For \(\alpha=(i,j,u,v)\in\mathcal A_G\), let
\[
 F_\alpha:=Q\times\cdots\times Q\times\{p_u\}\times Q\times\cdots\times Q\times\{p_v\}\times Q\times\cdots\times Q\subseteq Q^k,
\]
where the singleton factors occur in blocks $i$ and $j$.
Then $\psi_\alpha=\sigma_{F_\alpha}$.
By the support-function calculus for polytopes~\citep{Ziegler1995}, the maximum of support functions is the support function of the convex hull of the union, and therefore
\[
 h=\max_\alpha \sigma_{F_\alpha}=\sigma_{\conv(\cup_\alpha F_\alpha)}.
\]
Therefore $X=\Csub h(0)=\conv(\cup_\alpha F_\alpha)\subseteq Q^k=Y$.
Because each $F_\alpha$ is a face of $Y$, the polytope $X$ is the convex hull of precisely those vertices of $Y$ that lie in at least one $F_\alpha$.
A vertex of $Y$ has the form $y(v_1,\dots,v_k):=(p_{v_1},\dots,p_{v_k})$.
It lies in $F_{i,j,u,v}$ if and only if $v_i=u$ and $v_j=v$.
Thus it is covered whenever some pair $(v_i,v_j)$ is forbidden.
Conversely, if no pair is forbidden, then the tuple corresponds to a $k$-clique and the vertex is not covered.
It also cannot lie in the convex hull of the covered vertices: by the exposed-face facts for polytopes~\citep{Ziegler1995}, every vertex of $Y=Q^k$ is exposed.
Indeed, for $p_v=(v,v^2)$ the vector $r_v:=(2v,-1)$ satisfies $\langle p_v,r_v\rangle-\langle p_a,r_v\rangle=(a-v)^2>0$ for $a\ne v$, and the product vector $(r_{v_1},\dots,r_{v_k})$ exposes $y(v_1,\dots,v_k)$ in $Y$.
Consequently, among the vertices of $Y$, the polytope $X$ contains exactly those corresponding to non-clique tuples.

If $G$ has no $k$-clique, every vertex of $Y$ is covered, so $Y=\conv(\vertices(Y))\subseteq X\subseteq Y$ and hence $X=Y$.
If $G$ has a $k$-clique $(v_1,\dots,v_k)$, the exposed vertex $y^*=y(v_1,\dots,v_k)$ is missing from $X$, so $X\subsetneq Y$.
Moreover, the exposing vector above gives a strict gap between $y^*$ and every covered vertex; hence $X$ is contained in a closed halfspace that cuts off a nonempty small cap of the full-dimensional polytope $Y$.
Thus $\vol(X)<\vol(Y)$, where volume is taken in $\R^{2k}$.
If some translate $s+Y$ were contained in $X$, then $\vol(Y)=\vol(s+Y)\le\vol(X)<\vol(Y)$, a contradiction.

Finally, in the no-clique case $X=Y$.
If $s+Y\subseteq Y$, then support functions give $\langle s,u\rangle+\sigma_Y(u)\le\sigma_Y(u)$ for all $u$, so $\langle s,u\rangle\le0$ for all $u$.
Applying this to both $u$ and $-u$ yields $s=0$.
\end{proof}

\begin{lemma}[Fr\'echet gadget]\label{lem:frechet-gadget}
For $f_F:=h-g$, the following equivalences hold:
\[
 G\text{ has no }k\text{-clique}\iff \Fsub f_F(0)=\{0\},
 \qquad
 G\text{ has a }k\text{-clique}\iff \Fsub f_F(0)=\emptyset.
\]
In particular, for every fixed $\eps\ge0$, $G$ has a $k$-clique if and only if $\dist(0,\Fsub f_F(0))>\eps$.
Moreover, $G$ has a $k$-clique if and only if the origin is not a local minimizer of $f_F$.
\end{lemma}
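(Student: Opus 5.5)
The plan is to combine Lemma~\ref{lem:frechet-translation} with the clique polytope gadget of Lemma~\ref{lem:clique-polytope}. Both $h$ and $g$ are convex PA functions on $\R^{2k}$: $g$ is a sum of support functions of the polytope $Q$, and $h$ is a maximum of such sums plus linear terms. Their subdifferentials at the origin are exactly $Y=Q^k$ and $X$. Lemma~\ref{lem:frechet-translation}, applied at $w=0$, therefore gives
\[
 \Fsub f_F(0)=\{s\in\R^{2k}:\ s+Y\subseteq X\}.
\]

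The last two assertions of Lemma~\ref{lem:clique-polytope} identify this set in each case:
\begin{itemize}
 \item If $G$ has no $k$-clique, the set is $\{0\}$, since $s=0$ is the unique admissible translate.
 \item If $G$ has a $k$-clique, the set is empty.
\end{itemize}
The two conditions $\Fsub f_F(0)=\{0\}$ and $\Fsub f_F(0)=\emptyset$ are mutually exclusive, and exactly one of "clique" or "no clique" holds. So the two one-directional implications upgrade to the stated biconditionals.

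For the $\eps$-statement, fix $\eps\ge0$.
\begin{itemize}
 \item In the no-clique case, $\dist(0,\{0\})=0\le\eps$.
 \item In the clique case, $\dist(0,\emptyset)=+\infty>\eps$ by the convention of Section~\ref{sec:prelim}.
\end{itemize}
Hence $\dist(0,\Fsub f_F(0))>\eps$ holds exactly when $G$ has a $k$-clique.

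For local minimality, $f_F=h-g$ is a continuous PA function. Lemma~\ref{lem:pa-local-model} says the origin is a local minimizer iff $0\in\Fsub f_F(0)$. By the dichotomy above, this holds exactly in the no-clique case.

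There is no real obstacle, since the geometric work (the volume argument and the uniqueness of the translate) is already in Lemma~\ref{lem:clique-polytope}. The only point to check is that $\Csub h(0)$ and $\Csub g(0)$ coincide with the convex subdifferentials used in Lemma~\ref{lem:frechet-translation}. This holds because convex PA functions are Clarke regular, as noted in Section~\ref{subsec:stationarity}.
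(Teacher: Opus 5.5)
Your proposal is correct and follows the paper's own proof essentially verbatim. Lemma~\ref{lem:frechet-translation} identifies $\Fsub f_F(0)$ with the set of admissible translates, Lemma~\ref{lem:clique-polytope} settles both cases, the convention $\dist(0,\emptyset)=+\infty$ handles the $\eps$-statement, and Lemma~\ref{lem:pa-local-model} gives the local-minimality claim. Your extra remarks, on the mutual exclusivity of the two cases and on the Clarke regularity of the convex components, only make explicit what the paper leaves implicit.
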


\begin{proof}
By Lemma~\ref{lem:frechet-translation}, $\Fsub f_F(0)=\{s:s+Y\subseteq X\}$.
The result is exactly Lemma~\ref{lem:clique-polytope}.
The final stationarity statement uses the convention $\dist(0,\emptyset)=+\infty$ and the fact that $\dist(0,\{0\})=0$.
The local-minimality statement follows from Lemma~\ref{lem:pa-local-model}, since $f_F$ is continuous PA and $0\in\Fsub f_F(0)$ is equivalent to local minimality at the origin.
\end{proof}

To obtain Clarke hardness, we use the seesaw gadget introduced by~\citep{TianSo2025}, where a scalar variable \(t\) and the branch \(-|t|/2\) are added.  In the no-clique case, \(f_F\equiv0\), so the gadget is just \(t/2\).  In the clique case, a negative direction of \(f_F\) makes the branch \(-|t|/2\) active on an open cone, where \(t/2-|t|/2=0\).  Thus, the zero slope appears in the clique case, and $0\in\Csub f_C(0,0)$.
\begin{lemma}[Seesaw gadget]\label{lem:seesaw}
Let $f_C(z,t)=t/2+\max\{f_F(z),-|t|/2\}$.
If $G$ has no $k$-clique, then $\dist(0,\Csub f_C(0,0))=1/2$.
If $G$ has a $k$-clique, then $0\in\Csub f_C(0,0)$.
\end{lemma}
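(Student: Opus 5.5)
The plan is to treat the two cases separately, using Lemma~\ref{lem:frechet-gadget} (and the underlying Lemma~\ref{lem:clique-polytope}) to control $f_F$. Note first that $f_C$ is continuous and positively homogeneous, so $\Csub f_C(0,0)$ is the convex hull of the slopes of $f_C$ on full-dimensional cones of its linearity fan at the origin. This follows from Lemma~\ref{lem:pa-local-model} and the PA description of the Clarke subdifferential recalled in Appendix~\ref{app:stationarity-guide}.

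\emph{No-clique case.} By Lemma~\ref{lem:clique-polytope}, $X=Y$, hence $h=\sigma_X=\sigma_Y=g$ and $f_F\equiv0$. Since $-|t|/2\le 0$, the maximum equals $0$ everywhere, so $f_C(z,t)=t/2$ is linear. Its Clarke subdifferential at the origin is the singleton $\{(0,1/2)\}$, whose distance from $0$ is exactly $1/2$. This case is immediate.

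\emph{Clique case.} Let $(v_1,\dots,v_k)$ be a clique. I would produce an explicit direction $z^\star$ with $f_F(z^\star)<-1$. Take $r=(r_{v_1},\dots,r_{v_k})$ with $r_v=(2v,-1)$ as in the proof of Lemma~\ref{lem:clique-polytope}. Then $g(r)=\sigma_Y(r)=\langle y^\ast,r\rangle$, where $y^\ast=(p_{v_1},\dots,p_{v_k})$. Also $h(r)=\sigma_X(r)$ is a maximum over covered vertices $y(u_1,\dots,u_k)\neq y^\ast$. Each such vertex differs from $y^\ast$ in at least one block $i$, so $\langle y^\ast-y(u),r\rangle\ge\sum_i (u_i-v_i)^2\ge1$. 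Hence $f_F(r)=h(r)-g(r)\le-1$, and scaling by $2$ gives $z^\star=2r$ with $f_F(z^\star)\le-2<-1$.

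At $(z^\star,1)$ we then have $f_F(z^\star)<-1/2=-|1|/2$. By continuity, on an open neighborhood $U$ of $(z^\star,1)$ we get $f_F(z)<-|t|/2$ and $t>0$, so there $f_C(z,t)=t/2-t/2=0$. By positive homogeneity of $f_F$ and of $|t|$, the same holds on the open cone $\{\lambda w:\lambda>0,\ w\in U\}$. This cone is full-dimensional, contains the origin in its closure, and $f_C$ is affine on it with gradient $0$. Thus $0$ is an essentially active slope, so $0\in\Csub f_C(0,0)$. This follows either from the limiting-gradient formula (take differentiability points $\lambda(z^\star,1)$ with $\lambda\downarrow0$) or from the convex-hull-of-active-slopes description.

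The only step needing care is the strict gap $f_F(z^\star)<-1$. It relies on identifying $h=\sigma_X$ with $X$ being the hull of exactly the covered vertices, which Lemma~\ref{lem:clique-polytope} already establishes. The integer gap $(u-v)^2\ge1$ then makes the bound explicit. Everything else is routine continuity and homogeneity.
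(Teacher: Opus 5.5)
Your proof is correct and follows essentially the same route as the paper. The no-clique case is identical. In the clique case you use the exposing direction $r=(r_{v_1},\dots,r_{v_k})$ of the missing clique vertex, scaled so that $f_F<-1$, and then continuity and positive homogeneity give an open full-dimensional cone on which $f_C\equiv0$. The only difference is cosmetic: you derive $f_F(r)\le-1$ from the vertex description $h=\sigma_X$, with $X$ the hull of the covered vertices (Lemma~\ref{lem:clique-polytope}), whereas the paper evaluates each forbidden branch $\psi_\alpha-g$ directly at $Tr$.
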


\begin{proof}
If $G$ has no $k$-clique, then Lemma~\ref{lem:clique-polytope} gives $X=Y$.
Since $h=\sigma_X$ and $g=\sigma_Y$, we have $f_F\equiv0$.
Therefore $f_C(z,t)=t/2+\max\{0,-|t|/2\}=t/2$, and $\Csub f_C(0,0)=\{(0_{2k},1/2)\}$.
Thus $\dist(0,\Csub f_C(0,0))=1/2$.

Now suppose $G$ has a $k$-clique $(v_1,\dots,v_k)$.
For each block choose the exposing vector $r_i:=(2v_i,-1)\in\R^2$.
Then $p_{v_i}$ uniquely maximizes $\langle\cdot,r_i\rangle$ over $Q$, and for every $a\ne v_i$,
\[
 \langle p_{v_i}-p_a,r_i\rangle=(a-v_i)^2\ge1.
\]
Set $z_i:=T r_i$ and let $z=(z_1,\dots,z_k)$.
For a forbidden branch $\alpha=(i,j,u,v)$ of $f_F$, its value at $z$ is
\[
 B_\alpha(z)
 =\langle p_u,z_i\rangle-\sigma_Q(z_i)
   +\langle p_v,z_j\rangle-\sigma_Q(z_j)
 =T\langle p_u-p_{v_i},r_i\rangle
   +T\langle p_v-p_{v_j},r_j\rangle.
\]
Both terms are nonpositive, and at least one is at most $-T$, because $(v_i,v_j)$ is not forbidden whereas $(u,v)$ is forbidden.
Hence $B_\alpha(z)\le -T$ for every forbidden branch, and therefore $f_F(z)\le -T$.
Choose $T>1$.
Then $f_F(z)<-1$.
At the point $(z,1)$ we have $f_F(z)<-1<-1/2$.
By continuity, there is an open full-dimensional neighborhood $U$ of $(z,1)$ such that every $(\tilde z,\tilde t)\in U$ satisfies $\tilde t>0$ and $f_F(\tilde z)<-\tilde t/2$.
Let
\[
 C:=\{\lambda(\tilde z,\tilde t):\lambda>0,
      (\tilde z,\tilde t)\in U\}.
\]
Because $f_F$ is positively homogeneous, every $(z',t')\in C$ satisfies $t'>0$ and $f_F(z')<-t'/2=-|t'|/2$.
Thus, throughout the full-dimensional open cone $C$, $f_C(z',t')=t'/2-t'/2=0$.
Hence the affine piece with gradient $0$ is attained on a full-dimensional region adjacent to the origin.
For PA functions, the Clarke subdifferential at the origin is the convex hull of gradients on full-dimensional adjacent cells.
Therefore $0\in\Csub f_C(0,0)$.
\end{proof}

It remains to check that the hard functions just constructed are given efficiently in the two input models studied in the paper.
\begin{lemma}[Max--min and depth-two $n$-$\MC$ DC representations]\label{lem:representations}
The Fr\'echet gadget admits the explicit max--min representation
\[
 f_F(z)=
 \max_{1\le i<j\le k,\ (u,v)\in\Forb}
 \min_{a,b\in[N]}
 \bigl(\langle p_u-p_a,z_i\rangle+\langle p_v-p_b,z_j\rangle\bigr).
\]
It also admits an explicit polynomial-size DC representation $f_F=h-g$ in which $g$ is $1$-$\MC$ and $h$ is $2$-$\MC$.
The Clarke gadget $f_C(z,t)=t/2+\max\{f_F(z),-|t|/2\}$ admits both an explicit polynomial-size max--min representation and an explicit polynomial-size DC representation $f_C=H_C-G_C$ in which $G_C$ is $1$-$\MC$ and $H_C$ is $2$-$\MC$.
In particular, both gadgets have explicit $n$-$\MC$ DC representations of constant depth (indeed depth two after padding).
\end{lemma}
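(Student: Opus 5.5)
The plan is to derive every representation from one identity,
\[
 \langle p_u,z_i\rangle-\sigma_Q(z_i)=\min_{a\in[N]}\langle p_u-p_a,z_i\rangle,
\]
which holds because $\sigma_Q(z_i)=\max_{a\in[N]}\langle p_a,z_i\rangle$: a polytope's support function is the maximum over its generating points. Write $g=\sum_r\sigma_Q(z_r)$. Subtracting $g$ from $\psi_{i,j,u,v}$ cancels every block $r\notin\{i,j\}$. What remains is $\min_a\langle p_u-p_a,z_i\rangle+\min_b\langle p_v-p_b,z_j\rangle$. A sum of two minima over independent indices is the minimum over pairs $(a,b)$ of the sums. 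The outer maximum commutes with subtracting the common term $g$. This gives the displayed max--min formula for $f_F=h-g$, with $|\mathcal A_G|\le k^2N^2$ outer indices and $N^2$ inner affine pieces each. All coefficients are integers of bit length $O(\log N)$, so the size is $\poly(N,k)$ and the construction runs in polynomial time.

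For the DC form, I take $g=\sum_{r=1}^k\max_{a\in[N]}\langle p_a,z_r\rangle$. This is directly $1$-$\MC$, a sum of maxima of affine leaves. For $h=\max_{\alpha}\psi_\alpha$, note that each $\psi_\alpha$ is a sum over the $k$ blocks of inner maxima: block $r\notin\{i,j\}$ contributes $\max_a\langle p_a,z_r\rangle$, and blocks $i,j$ contribute one-element maxima $\langle p_u,z_i\rangle$ and $\langle p_v,z_j\rangle$. So $h$ has the shape $\max_\alpha\sum_{r}\max_{a}(\cdot)$. The $n$-$\MC$ syntax puts a sum outermost and uses rectangular ranges, so I pad. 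I wrap the expression in an outer sum with $J_2=1$ term. I make each inner max have $I_1=N$ entries by repeating the single leaf $N$ times for the fixed blocks. Each $\psi_\alpha$ is then a sum of exactly $J_1=k$ terms. Finally I set $I_2=|\mathcal A_G|$. The result is a $2$-$\MC$ formula of size $O(k^3N^3)$. Padding $g$ to depth two the same way (with trivial outer max and sum) gives the parenthetical "depth two after padding".

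For the Clarke gadget, write $-|t|/2=\min\{t/2,-t/2\}$. Then
\[
 f_C=\max\Bigl\{\max_{\alpha}\min_{a,b}\bigl(\tfrac t2+\langle p_u-p_a,z_i\rangle+\langle p_v-p_b,z_j\rangle\bigr),\ \min\{t,0\}\Bigr\}.
\]
This is again a single max--min formula, since adding $t/2$ commutes with both max and min. For DC, use $-|t|/2=-\max\{t/2,-t/2\}$ and $\max\{h-g,\,-|t|/2\}=\max\{h,\,g-|t|/2\}-g$. To make the second branch convex, add $|t|/2$ to both sides:
\[
 f_C=\Bigl(\max\{h+\tfrac{|t|}{2},\,g\}+\tfrac t2\Bigr)-\Bigl(g+\tfrac{|t|}{2}\Bigr).
\]
Set $G_C:=g+\max\{t/2,-t/2\}$, which is still $1$-$\MC$ since it just adds one more max summand.

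The main obstacle is showing that $H_C:=\max\{h+|t|/2,\,g\}+t/2$ is $2$-$\MC$ without increasing depth. The approach is to absorb everything into the existing layers. The term $|t|/2$ goes into each $\psi_\alpha$ as an extra summand $\max\{t/2,-t/2\}$, padded to $N$ entries. The branch $g$ becomes one more outer-max alternative of the form $\sum_r\max_a(\cdot)$, with a padding summand $\max\{0,\dots\}$ so that every alternative has $k+1$ summands. Adding the linear term $t/2$ at the top would break the required format. Instead I push it into every alternative and every leaf of one fixed summand; this is valid because adding a constant commutes with max. A final check confirms that all index ranges are rectangular after duplicating leaves and that the total size stays $\poly(N,k)$.
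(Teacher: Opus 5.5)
Your proposal is correct and follows essentially the same route as the paper: the identity $\langle p_u,z_i\rangle-\sigma_Q(z_i)=\min_{a}\langle p_u-p_a,z_i\rangle$ gives both max--min forms, the degenerate-max encoding of each $\psi_\alpha$ makes $h$ a $\max$--$\sum$--$\max$ formula, and the Clarke gadget uses the same split $f_C=\bigl(\max\{h+|t|/2,\,g\}+t/2\bigr)-\bigl(g+|t|/2\bigr)$. The differences are cosmetic. You keep $\max\{t,0\}$ as an extra inner max summand, whereas the paper moves it into the outer max via $\eta\in\{0,1\}$, and you are more explicit about the trivial outer sum ($J_2=1$) and the rectangular padding; also, adding $t/2$ at the top would not actually break the format, since it could be a second padded outer summand with $J_2=2$, though your push-inside alternative works equally well.
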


\begin{proof}
Since $\sigma_Q(z_i)=\max_{a\in[N]}\langle p_a,z_i\rangle$,
\[
 \langle p_u,z_i\rangle-\sigma_Q(z_i)=\min_{a\in[N]}\langle p_u-p_a,z_i\rangle,
 \qquad
 \langle p_v,z_j\rangle-\sigma_Q(z_j)=\min_{b\in[N]}\langle p_v-p_b,z_j\rangle.
\]
Substituting these identities into the branch formula for $f_F=h-g$ gives the displayed max--min representation.

The convex component $g(z)=\sum_{r=1}^k \sigma_Q(z_r)=\sum_{r=1}^k\max_{a\in[N]}\langle p_a,z_r\rangle$ is a $1$-$\MC$ formula.
Let $\mathcal A:=\{(i,j,u,v):1\le i<j\le k,\ (u,v)\in\Forb\}$.
For $\alpha=(i,j,u,v)\in\mathcal A$, define, for $r\in[k]$ and $a\in[N]$,
\[
 q_{\alpha,r,a}:=
 \begin{cases}
 p_u, & r=i,\\
 p_v, & r=j,\\
 p_a, & r\notin\{i,j\}.
 \end{cases}
\]
Then $\psi_\alpha(z)=\sum_{r=1}^k\max_{a\in[N]}\langle q_{\alpha,r,a},z_r\rangle$, because the max is degenerate in the two fixed blocks.
Thus
\[
 h(z)=\max_{\alpha\in\mathcal A}
      \sum_{r=1}^k\max_{a\in[N]}\langle q_{\alpha,r,a},z_r\rangle,
\]
which is a $2$-$\MC$ formula: an outer maximum over $\alpha$, followed by a sum over blocks and an inner maximum over $a$.
Consequently $f_F=h-g$ is a DC representation whose convex components have depths at most two after padding the $1$-$\MC$ formula for $g$.

For the Clarke gadget, the max--min form follows from $f_C(z,t)=\max\{f_F(z)+t/2,\min\{t,0\}\}$ and the max--min representation of $f_F$.
For the DC representation, define
\[
 G_C(z,t):=g(z)+\frac{|t|}{2}
 =\sum_{r=1}^k\max_{a\in[N]}\langle p_a,z_r\rangle
  +\max\left\{\frac{t}{2},-\frac{t}{2}\right\}.
\]
This is a $1$-$\MC$ formula.
Next set
\[
 H_C(z,t):=\max\left\{h(z)+\frac{|t|}{2},g(z)\right\}+\frac{t}{2}
 =\max\left\{h(z)+\max\{t,0\},\ g(z)+\frac{t}{2}\right\}.
\]
The first class of branches can be written as
\[
 \max_{\alpha\in\mathcal A,\ \eta\in\{0,1\}}
 \left[
 \sum_{r=1}^k\max_{a\in[N]}\langle q_{\alpha,r,a},z_r\rangle+\eta t
 \right],
\]
and the remaining branch is $\sum_{r=1}^k\max_{a\in[N]}\langle p_a,z_r\rangle+t/2$.
Therefore $H_C$ is an outer maximum over polynomially many branches, each branch being a sum of $k+1$ max-affine groups, where the last group is a singleton affine function in $t$.
Hence $H_C$ is $2$-$\MC$.
Finally,
\[
 H_C(z,t)-G_C(z,t)
 =\frac{t}{2}+\max\left\{h(z)-g(z),-\frac{|t|}{2}\right\}
 =f_C(z,t).
\]
All formulas have polynomial size in $N$ and $k$, and the depths are bounded by two.
\end{proof}

\paragraph{Proof of Theorem~\ref{thm:w1-hard}.}
We reduce from the \(k\)-Clique problem; its W[1]-hardness and ETH lower bound are classical~\citep{CyganEtAl2015}.
Lemma~\ref{lem:frechet-gadget} proves the Fr\'echet hardness equivalence and the corresponding non-local-minimality equivalence, and 
Lemma~\ref{lem:seesaw} proves the Clarke hardness equivalence for every fixed $\eps\in[0,1/2)$. Lemma~\ref{lem:representations} provides both required input representations for our constructed hard instances.
The Fr\'echet construction has dimension $2k$ and the Clarke construction has dimension $2k+1$.
In both constructions, the branches are indexed by at most $O(k^2N^2)$ forbidden choices together with polynomially many vertex choices, and all coefficients have polynomial bit length.
Thus the instances can be output in time $f(k)|(G,k)|^{O(1)}$ and the target dimension is bounded by a function of $k$, so both reductions are valid parameterized reductions.
Since ETH rules out algorithms for $k$-Clique on $N$-vertex graphs running in time $\rho(k)N^{o(k)}$, and since the target encoding size satisfies $\size\le N^{O(1)}$ with $d=\Theta(k)$, the same reductions rule out algorithms for the Fr\'echet, non-local-minimality, and Clarke hard polarities running in time $\rho(d)\size^{o(d)}$.

\section{Deferred proofs for Section~\ref{sec:cnn}}\label{app:cnn-proofs}

This appendix transfers the clique gadgets from Appendix~\ref{app:hardness-proofs} to the displayed shallow CNN family.  First, we prove the local ReLU penalty that replaces the support-function terms used in the PA gadget.  Second, we show that the losses in our hard instances can be realized using fixed \(1\times1\) convolutions, ReLUs, fixed linear readouts, and max-pooling.  Finally, we verify the Fr\'echet and Clarke hardness reductions and provide an XP algorithm for the stationarity tests of the constructed shallow CNN loss family.

We use the same graph notation as in Appendix~\ref{app:hardness-proofs}.  Throughout, \(G=([N],E)\),
\[
    p_v=(v,v^2)\in\Z^2,
    \qquad
    \mathcal N_{\!1}(u)=\{a\in[N]: |a-u|=1\},
\]
and
\[
    \Forb=\{(u,v)\in[N]^2:u=v\text{ or }\{u,v\}\notin E\}.
\]
As before, \(\Forb\) records exactly the ordered pairs that cannot appear as two positions of a valid clique.  We assume \(N\ge2\); smaller instances are trivial or may be padded by isolated vertices.

\subsection{The local ReLU penalty}\label{app:cnn-penalty-proof}

\begin{lemma}[Separation property of the local ReLU penalty]\label{lem:cnn-penalty}
For \(u\in[N]\), define
\[
    P_u(s):=\sum_{a\in\mathcal N_{\!1}(u)}
    \relu\bigl(\langle p_a-p_u,s\rangle\bigr),
    \qquad s\in\R^2 .
\]
If \(p_u\) maximizes \(a\mapsto\langle p_a,s\rangle\) over \([N]\), then \(P_u(s)=0\).
Moreover, for \(r_c=(2c,-1)\), one has \(P_u(r_c)=0\) when \(u=c\), and \(P_u(r_c)\ge1\) when \(u\ne c\).
\end{lemma}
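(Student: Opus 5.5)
The lemma is elementary, and I will prove its two claims separately.

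For the first claim, suppose $p_u$ maximizes $a\mapsto\langle p_a,s\rangle$ over $[N]$. Then $\langle p_a,s\rangle\le\langle p_u,s\rangle$ for every $a\in[N]$. In particular this holds for each $a\in\mathcal N_{\!1}(u)\subseteq[N]$, so $\langle p_a-p_u,s\rangle\le0$. Each summand $\relu(\langle p_a-p_u,s\rangle)$ therefore vanishes, and $P_u(s)=0$. Nothing beyond the definition of $\relu$ is needed here.

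For the second claim, I will use the identity already recorded in the proof of Lemma~\ref{lem:clique-polytope}. For $r_c=(2c,-1)$ and any integer $a$,
\[
 \langle p_a-p_c,r_c\rangle=2c(a-c)-(a^2-c^2)=-(a-c)^2 .
\]
Hence, for any $u,a$,
\[
 \langle p_a-p_u,r_c\rangle=(u-c)^2-(a-c)^2 .
\]
Since $(a-c)^2\ge0$ with equality only at $a=c$, the point $p_c$ is the unique maximizer of $\langle p_a,r_c\rangle$. The first claim then gives $P_c(r_c)=0$.

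Now let $u\ne c$ with $u\in[N]$. Take the neighbor of $u$ one step toward $c$, namely $a:=u-1$ if $u>c$ and $a:=u+1$ if $u<c$. Because $c\in[N]$ lies strictly on that side of $u$, we have $a\in[N]$, so $a\in\mathcal N_{\!1}(u)$. Moreover $|a-c|=|u-c|-1$. Writing $m:=|u-c|\ge1$, the corresponding summand is
\[
 \relu\bigl(m^2-(m-1)^2\bigr)=2m-1\ge1 .
\]
All other summands are nonnegative, so $P_u(r_c)\ge1$.

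The only point requiring care is that this neighbor lies in $[N]$. This holds precisely because $c$ itself is in $[N]$ and the step is taken toward $c$; the remaining steps are direct computation.
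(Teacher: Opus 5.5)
Your proposal is correct and follows essentially the same route as the paper: the first claim comes straight from the sign of each ReLU argument, and the second uses the identity $\langle p_a,r_c\rangle=c^2-(a-c)^2$ together with the neighbor of $u$ one step toward $c$, which gives a summand $2|u-c|-1\ge1$. Your explicit check that this neighbor lies in $[N]$ because $c\in[N]$ is a detail the paper leaves implicit.
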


\begin{proof}
If \(p_u\) maximizes \(a\mapsto\langle p_a,s\rangle\) over \([N]\), then
\(\langle p_a-p_u,s\rangle\le0\) for every \(a\in\mathcal N_{\!1}(u)\), and hence every ReLU term in \(P_u(s)\) vanishes.

For the second claim, note that for \(r_c=(2c,-1)\),
\[
    \langle p_a,r_c\rangle=2ca-a^2=c^2-(a-c)^2.
\]
Thus \(a=c\) is the unique maximizer over \([N]\), so \(P_c(r_c)=0\).
If \(u<c\), then \(u+1\in\mathcal N_{\!1}(u)\) and
\[
    \langle p_{u+1}-p_u,r_c\rangle
    =(u-c)^2-(u+1-c)^2=2(c-u)-1\ge1.
\]
If \(u>c\), then \(u-1\in\mathcal N_{\!1}(u)\) and
\[
    \langle p_{u-1}-p_u,r_c\rangle
    =(u-c)^2-(u-1-c)^2=2(u-c)-1\ge1.
\]
In either case at least one ReLU term in \(P_u(r_c)\) is at least \(1\), proving \(P_u(r_c)\ge1\) whenever \(u\ne c\).
\end{proof}

\subsection{Realizing the losses by shallow ReLU/max-pooling CNNs}\label{app:cnn-layer-realization}

Let
\[
    \mathcal A_G:=
    \{(i,j,u,v):1\le i<j\le k,\ (u,v)\in\Forb\}.
\]
For the Fr\'echet-side construction, the trainable parameter is
\[
    \theta=z=(z_1,\ldots,z_k)\in(\R^2)^k=\R^{2k}.
\]
For every \(\alpha=(i,j,u,v)\in\mathcal A_G\) and \(a\in\mathcal N_{\!1}(u)\), create a spatial site with fixed channel vector
\[
    x_{\alpha,L,a}:=(0,\ldots,0,\underbrace{p_a-p_u}_{i\text{-th block}},0,\ldots,0)\in\Q^{2k}.
\]
For every \(b\in\mathcal N_{\!1}(v)\), create a spatial site with fixed channel vector
\[
    x_{\alpha,R,b}:=(0,\ldots,0,\underbrace{p_b-p_v}_{j\text{-th block}},0,\ldots,0)\in\Q^{2k}.
\]
Write
\[
    B_\alpha(z):=-P_u(z_i)-P_v(z_j).
\]
The trainable \(1\times1\) convolutional filter outputs
\(\langle p_a-p_u,z_i\rangle\) and \(\langle p_b-p_v,z_j\rangle\) on these sites.
After the ReLU layer, a fixed rational readout over the sites belonging to branch \(\alpha\) computes
\[
    -\sum_{a\in\mathcal N_{\!1}(u)}
      \relu\bigl(\langle p_a-p_u,z_i\rangle\bigr)
    -\sum_{b\in\mathcal N_{\!1}(v)}
      \relu\bigl(\langle p_b-p_v,z_j\rangle\bigr)
    =
    B_\alpha(z).
\]
This readout can be implemented as a fixed convolutional/readout window with all nonzero coefficients equal to \(-1\).
A global max-pooling layer over the branch channels computes
\[
    L_G^{\mathrm F}(z)=\max_{\alpha\in\mathcal A_G} B_\alpha(z).
\]
The number of sites is at most \(4|\mathcal A_G|=O(k^2N^2)\), and all coefficients have polynomial bit length.

For the Clarke-side construction, the trainable parameter is
\[
    \theta=(z,t)\in\R^{2k+1}.
\]
Use the same branch sites, padded by one zero coordinate, and add two scalar sites with channel vectors \(e_{2k+1}\) and \(-e_{2k+1}\).
After ReLU, these two sites output \(\relu(t)\) and \(\relu(-t)\).
A fixed rational readout computes
\[
    R_t(t):=-\frac12\relu(t)-\frac12\relu(-t)=-\frac{|t|}{2}.
\]
The final max-pooling layer computes \(\max\{L_G^{\mathrm F}(z),R_t(t)\}\), and the scalar output adds the fixed affine term \(t/2\).
Equivalently, this affine term can be written as \(\relu(t)/2-\relu(-t)/2\) using the same two scalar sites.
Thus the realized loss is exactly
\[
    L_G^{\mathrm C}(z,t)
    =
    \frac{t}{2}+\max\left\{L_G^{\mathrm F}(z),-\frac{|t|}{2}\right\}.
\]

\subsection{Proof of Theorem~\ref{thm:cnn-hardness}}\label{app:cnn-hardness-proof}

\begin{proof}[Proof of Theorem~\ref{thm:cnn-hardness}]
The functions \(L_G^{\mathrm F}\) and \(L_G^{\mathrm C}\) are continuous, PA, and positively homogeneous.
Moreover \(L_G^{\mathrm F}(z)\le0\) for all \(z\), because every penalty \(P_u\) is nonnegative.

\paragraph{Fr\'echet reduction: no-clique case.}
Assume that \(G\) has no \(k\)-clique.
Fix arbitrary \(z=(z_1,\ldots,z_k)\).
For each block \(r\in[k]\), choose
\[
    v_r\in\argmax_{a\in[N]}\langle p_a,z_r\rangle .
\]
Since \((v_1,\ldots,v_k)\) is not a clique, there exist \(i<j\) such that \((v_i,v_j)\in\Forb\).
For the branch \(\alpha=(i,j,v_i,v_j)\), Lemma~\ref{lem:cnn-penalty} gives
\[
    P_{v_i}(z_i)=P_{v_j}(z_j)=0,
\]
and hence \(B_\alpha(z)=0\).
All branch scores are nonpositive, so \(L_G^{\mathrm F}(z)=0\).
As \(z\) was arbitrary, \(L_G^{\mathrm F}\equiv0\).
Therefore
\[
    \Fsub L_G^{\mathrm F}(0)=\{0\},
    \qquad
    \dist(0,\Fsub L_G^{\mathrm F}(0))=0.
\]

\paragraph{Fr\'echet reduction: clique case.}
Assume that \(G\) has a \(k\)-clique \((c_1,\ldots,c_k)\).
For \(T>0\), set
\[
    z_i:=T(2c_i,-1).
\]
For any forbidden branch \(\alpha=(i,j,u,v)\), the ordered pair \((c_i,c_j)\) is allowed whereas \((u,v)\) is forbidden, so \(u\ne c_i\) or \(v\ne c_j\).
Lemma~\ref{lem:cnn-penalty} gives \(P_u(z_i)\ge T\) in the first case and \(P_v(z_j)\ge T\) in the second.
Thus
\[
    B_\alpha(z)\le -T
\]
for every branch, and \(L_G^{\mathrm F}(z)\le -T<0\).

We now show that \(\Fsub L_G^{\mathrm F}(0)=\emptyset\).
Suppose \(g\in\Fsub L_G^{\mathrm F}(0)\).
Since \(L_G^{\mathrm F}\) is positively homogeneous, the Fr\'echet subgradient inequality gives
\[
    \langle g,d\rangle\le L_G^{\mathrm F}(d)\qquad\forall d.
\]
Taking \(d=g\) and using \(L_G^{\mathrm F}\le0\) yields \(\|g\|^2\le0\), so \(g=0\).
But \(0\in\Fsub L_G^{\mathrm F}(0)\) would imply \(L_G^{\mathrm F}(d)\ge0\) for all \(d\), contradicting the negative clique direction above.
Hence
\[
    \Fsub L_G^{\mathrm F}(0)=\emptyset.
\]
By the convention \(\dist(0,\emptyset)=+\infty\), clique instances are exactly the instances with
\[
    \dist(0,\Fsub L_G^{\mathrm F}(0))>\eps
\]
for every fixed \(\eps\ge0\).
The same two cases also decide local minimality: when there is no clique, \(L_G^{\mathrm F}\equiv0\), so the origin is a local minimizer; when there is a clique, the negative direction above shows that the origin is not a local minimizer.

\paragraph{Clarke reduction: no-clique case.}
If \(G\) has no \(k\)-clique, then \(L_G^{\mathrm F}\equiv0\).
Thus
\[
    L_G^{\mathrm C}(z,t)
    =
    \frac{t}{2}+\max\left\{0,-\frac{|t|}{2}\right\}
    =
    \frac{t}{2}.
\]
Consequently
\[
    \Csub L_G^{\mathrm C}(0,0)=\{(0,\ldots,0,1/2)\},
    \qquad
    \dist(0,\Csub L_G^{\mathrm C}(0,0))=\frac12.
\]
For every fixed \(\eps\in[0,1/2)\), no-clique instances are therefore Clarke-\(\eps\)-NO instances.

\paragraph{Clarke reduction: clique case.}
If \(G\) has a \(k\)-clique, the Fr\'echet part gives \(z^\star\) with
\[
    L_G^{\mathrm F}(z^\star)<-1.
\]
At \((z^\star,1)\), the branch \(-|t|/2\) has value \(-1/2\) and is strictly larger than \(L_G^{\mathrm F}(z^\star)\).
By continuity, there is a full-dimensional open neighborhood \(U\) of \((z^\star,1)\) on which \(t>0\) and the branch \(-|t|/2\) is strictly selected.
On \(U\),
\[
    L_G^{\mathrm C}(z,t)=\frac{t}{2}-\frac{t}{2}=0.
\]
Since \(L_G^{\mathrm C}\) is positively homogeneous, the cone generated by a sufficiently small open subset of \(U\) is a full-dimensional region whose closure contains the origin, and on this region the affine gradient is \(0\).
For PA functions, the Clarke subdifferential at the origin is the convex hull of gradients attained on adjacent full-dimensional regions.
Thus
\[
    0\in\Csub L_G^{\mathrm C}(0,0).
\]
This proves that clique instances are exactly the instances with
\[
    \dist(0,\Csub L_G^{\mathrm C}(0,0))\le\eps
\]
for every fixed \(\eps\in[0,1/2)\).

\paragraph{Parameterized and ETH consequences.}
The Fr\'echet construction has trainable dimension \(p=2k\); the Clarke construction has trainable dimension \(p=2k+1\).
The number of sites, fixed readout coefficients, and pooling windows is polynomial in \(N\) and \(k\): there are \(O(kN)\) local penalty channels and \(O(k^2N^2)\) forbidden-branch channels, all with rational coefficients of polynomial bit length.
Thus the CNN instances can be output in time \(f(k)|(G,k)|^{O(1)}\), with \(p\) bounded by a function of \(k\), so the reductions are valid parameterized reductions.
If any of the hard CNN problems above were solvable in time \(\rho(p)M^{o(p)}\), then \(k\)-Clique on \(N\) vertices would be solvable in time \(\rho'(k)N^{o(k)}\) for a computable function \(\rho'\), contradicting ETH.
\end{proof}

\subsection{Conversion to explicit max--min form}\label{app:cnn-maxmin-xp-proof}

\begin{proof}[Proof of Corollary~\ref{cor:cnn-complete}]
For \(\alpha=(i,j,u,v)\in\mathcal A_G\), write
\[
    \ell^{i,u}_a(z):=\langle p_a-p_u,z_i\rangle
    \quad(a\in\mathcal N_{\!1}(u)),
    \qquad
    \ell^{j,v}_b(z):=\langle p_b-p_v,z_j\rangle
    \quad(b\in\mathcal N_{\!1}(v)).
\]
The identity
\[
    -\relu(\ell)=\min\{0,-\ell\}
    =
    \min_{\omega\in\{0,1\}}(-\omega\ell)
\]
and distributivity of independent minima over sums give
\[
\begin{aligned}
    B_\alpha(z)
    &=
    \min_{\omega\in\{0,1\}^{\mathcal N_{\!1}(u)},\,
          \tau\in\{0,1\}^{\mathcal N_{\!1}(v)}}
       \left(
       -\sum_{a\in\mathcal N_{\!1}(u)}\omega_a\ell^{i,u}_a(z)
       -\sum_{b\in\mathcal N_{\!1}(v)}\tau_b\ell^{j,v}_b(z)
       \right).
\end{aligned}
\]
Since
\[
    |\mathcal N_{\!1}(u)|,\ |\mathcal N_{\!1}(v)|\le2,
\]
each branch has at most \(2^4=16\) affine pieces.
Thus
\[
    L_G^{\mathrm F}(z)
    =
    \max_{\alpha\in\mathcal A_G}
    \min_{\omega,\tau}
       \left(
       -\sum_{a\in\mathcal N_{\!1}(u)}\omega_a\ell^{i,u}_a(z)
       -\sum_{b\in\mathcal N_{\!1}(v)}\tau_b\ell^{j,v}_b(z)
       \right)
\]
is an explicit max--min formula of size \(O(|\mathcal A_G|)\) up to a constant factor, and hence of size polynomial in \(M\).

For the Clarke loss, use
\[
    \frac{t}{2}-\frac{|t|}{2}=\min\{t,0\}.
\]
Combining this identity with the preceding branch expansion yields
\[
    L_G^{\mathrm C}(z,t)
    =
    \max\left\{
    \max_{\alpha\in\mathcal A_G}\min_{\omega,\tau}
    \left[
       \frac{t}{2}
       -\sum_{a\in\mathcal N_{\!1}(u)}\omega_a\ell^{i,u}_a(z)
       -\sum_{b\in\mathcal N_{\!1}(v)}\tau_b\ell^{j,v}_b(z)
    \right],\;
    \min\{t,0\}
    \right\}.
\]
This is again an explicit max--min formula of size polynomial in \(M\) and dimension \(p=2k+1\).
Applying Theorem~\ref{thm:fixed-d-xp} gives deterministic time \(M^{O(p)}\) for Fr\'echet-\(\eps\)-YES and Clarke-\(\eps\)-NO, and complementing the deterministic output gives the same upper bound for the opposite polarities.
Together with Theorem~\ref{thm:cnn-hardness}, this proves Corollary~\ref{cor:cnn-complete}.
\end{proof}

\end{document}